\DeclareMathOperator{\ad}{ad}
\DeclareMathOperator{\Aut}{Aut}
\DeclareMathOperator{\End}{End}
\DeclareMathOperator{\GL}{GL}
\DeclareMathOperator{\rk}{rk}
\DeclareMathOperator{\Span}{Span}
\DeclareMathOperator{\tr}{tr}
\DeclareMathOperator{\Mat}{Mat}
\def \[{[\![}
\def \]{]\!]}
\newcommand{\g}{\mathfrak g}
\newcommand{\h}{\mathfrak h}
\newcommand{\R}{\mathbb{R}}
\newcommand{\C}{\mathbb{C}}
\newcommand{\F}{\mathbb{F}}
\newcommand{\Z}{\mathbb{Z}}
\newcommand \ab {\operatorname{ab}}
\newcommand \LS {\mathcal{LS}^{(m,n)}}
\theoremstyle{plain}
\newtheorem{theorem}{Theorem}[section]
\newtheorem{lemma}[theorem]{Lemma}
\theoremstyle{definition}
\newtheorem{definition}[theorem]{Definition}
\theoremstyle{remark}
\numberwithin{equation}{section}
\begin{document}


\title{On degenerations of Lie superalgebras}

\author{Mar\'ia Alejandra Alvarez}

\address{Departamento de Matem\'aticas - Universidad de Antofagasta - Chile}
\email{maria.alvarez@uantof.cl}

\author{Isabel Hern\'andez}
\address{CONACYT-CIMAT - Unidad M\'erida}
\email{ isabel@cimat.mx}


\maketitle
\begin{abstract}
We give necessary conditions for the existence of degenerations  between  two complex Lie superalgebras of dimension $(m,n)$. As an application, we study the variety  $\mathcal{LS}^{(2,2)}$ of complex Lie superalgebras  of dimension $(2,2)$. First we give the algebraic classification and then obtain that $\mathcal{LS}^{(2,2)}$ is the union of seven irreducible components, three of which are the Zariski closures of rigid Lie superalgebras. As byproduct, we obtain an example of a nilpotent rigid Lie superalgebra, in contrast to the classical case where no example is known.\\
{\it Keywords:}
Lie superalgebras, geometric classification, degenerations.\\
{\it 2010 MSC}: 17B30,    17B56, 17B99 
\end{abstract}

\section{Introduction}

 The problem of determining the orbits closures,  or the irreducible components of varieties of algebras, have been studied for many structures: Lie algebras (\cite{B1}, \cite{B2}, \cite{BS}, \cite{Ch}, \cite{GO'H}, \cite{KN}, \cite{L}, \cite{NP}, \cite{S}, \cite{W}), Jordan algebras (\cite{ACGS}, \cite{AFM}, \cite{GKP}, \cite{KM}, \cite{KP}), Leibniz algebras (\cite{CKLO}, \cite{IKV}, \cite{KPPV}), pre-Lie algebras in \cite{BB1}, Novikov algebras in \cite{BB2},  Filippov algebras in \cite{dAIP}, binary Lie and nilpotent Malcev algebras in \cite{KPV}, superalgebras in \cite{AZ}, etc. However, we have found no literature about Lie superalgebras.

The aim of this paper is to give necessary conditions for the existence of degenera\-tions   between  two complex Lie superalgebras of dimension $(m,n)$.  For this goal,  some invariants are studied  (see  section $2.1$).  As an application,  we study the variety of $(2,2)$-dimensional Lie superalgebras, 
where the group $\GL_2 (\C) \oplus \GL_2(\C)$ acts by ``change of basis"   producing  fourteen orbits,  five orbits depending of one parameter, and one orbit depending of two parameters (see Theorem \ref{thm:alg_clas}).  After that, we obtain the Zariski closure of every orbit,  and the irreducible components of  this  variety (see Theorem \ref{2-2-irred-comp}).  Moreover, we obtain a  nilpotent  rigid  Lie superalgebra, i.e., a nilpotent Lie superalgebra whose Zariski orbit is open (see   Theorem \ref{thm:rigid}).  It is important to notice that in the classical case, there are no known examples of nilpotent rigid Lie algebras.    

%
\subsection{Preliminaries}
A  supervector space $V=V_0\oplus V_1$ over the field $\F$, is a $\Z_2$-graded  $\F$-vector space, i.e.,  a  vector space decomposed into a direct sum of subspaces. The elements on
 $V_0\setminus\{ 0 \}$ (respectively,  on $ V_1 \setminus \{0\}$) are called even (respectively, odd). Even and odd elements together are called homogeneous; the degree of an homogeneous element is
 $i$, denoted by $|v|=i$,  if $v \in V_i \setminus \{ 0 \}$, for $i \in \Z_2$. If $\dim_\F (V_0)=m$ and $\dim_\F (V_1)=n$, we say that the dimension of $V$ is $(m,n)$.   The vector space $\End(V)$ can be viewed as a  supervector space,  denoted by  $\End(V_0|V_1)$,  where  $\End( V_0 | V_1)_i = \{  T  \in \End(V) \; | \; T(V_j)\subset V_{i+j},   \;  j \in \Z_2  \}$, for $i\in \Z_2$. Given an homogeneous basis  $\{e_1,  \dots, e_{m}, f_1, \dots, f_n\}$ for  $V=V_0 \oplus V_1$, (that is, $V_0 = \Span_\F\{ e_1, \dots,
 e_m\} $ and $V_1 = \Span_\F\{ f_1, \dots,
 f_n\}$), it follows that $\End(V_0 | V_1)_i$ can be identified with $(\Mat_{(m|n)}(\F))_i$, for $ i \in \Z_2$,  where
$$
\begin{array}{l}
(\Mat_{(m|n)}(\F))_0= \left\{
\begin{psmallmatrix}
A & 0 \\
0 & D
\end{psmallmatrix} |\, A \in \Mat_m(\F),  \; D \in \Mat_n(\F)   \right\}, \quad \text{ and }
 \\
 \\
(\Mat_{(m|n)}(\F))_1= \left\{
\begin{psmallmatrix}
0 & B \\
C & 0
\end{psmallmatrix} |\, C \in \Mat_{n \times m}(\F), \; B \in \Mat_{m\times n}(\F) \right\}.
 \end{array}
$$
 In particular,  $\Aut(V_0|V_1)$ is a $\Z_2$-graded group such that $\Aut(V_0|V_1)_0$  can be identified  with   $\GL_m(\F) \oplus \GL_n(\F)$.

A Lie superalgebra  is a supervector space $\g=\g_0 \oplus \g_1$ endowed with a
bilinear map  $\[ \cdot,  \cdot \] : V \times  V \to V$   satisfying the following
\begin{enumerate}[(i)]
\item $\[\g_i, \g_j\] \subset \g_{i+j}$, for $i, j \in \Z_2$.
\item The super skew-symmetry:   $\[x,y\]= -(-1)^{|x||y|}\[y, x\]$.
\item The super Jacobi identity:
 $$(-1)^{|x||z|}\[ \[ x,y    \], z    \] +(-1)^{|x||y|}\[ \[ y,z    \], x    \] +   (-1)^{|y||z|}\[ \[ z,x    \], y    \] =0$$
\end{enumerate}
for  $x,y,z \in (\g_0 \cup \g_1) \setminus \{ 0 \}$.  A linear map  between Lie superalgebras $\Phi: \g \to \g^\prime$  is called a Lie superalgebra morphism if $\Phi$ is even (i.e.  $\Phi(\g _i) \subset    \g_i $,  for $ i \in \Z_2$), and
$\Phi (\[ x,y\] )= \[ \Phi(x),  \Phi(y) \] ^\prime$, for $x,y \in \g$.
 (see \cite{Sc} for standard terminology on Lie superalgebras).

Let  $V=V_0\oplus V_1$ be a  complex $(m,n)$-dimensional  supervector space with a fixed homogeneous bases
$\left\{e_1,\dots,e_m, f_1,\dots,f_n\right\}$.  Given a Lie superalgebra structure $\[ \cdot, \cdot \]$ on $V$,  we can identify  $ \g=(V , \,  \[ \cdot, \cdot \])$  with its set of structure cons\-tants 
$\left\{c_{ij}^k,\rho_{ij}^k,\Gamma_{ij}^k\right\}\in\C^{m^3+2mn^2}$, where
$$\[e_i,e_j\]=\sum_{k=1}^mc_{ij}^ke_k,\quad \[e_i,f_j\]=\sum_{k=1}^n\rho_{ij}^kf_k,\quad\text{and}\quad \[f_i,f_j\]=\sum_{k=1}^m\Gamma_{ij}^ke_k.$$
Since every set of structure constants must satisfy the polynomial equations given by the super skew-symmetry  and the super Jacobi identity,  the set of all Lie superalgebras of dimension $(m,n)$ is an affine variety in $\C^{m^3+2nm^2}$, denoted by $\LS$. Notice that every point $\{ c_{ij}^k, \rho_{ij}^k , \Gamma_{ij}^k \} \in \LS$ represents a  Lie superalgebra of dimension $(m,n)$.

On the other hand,  since Lie superalgebra isomorphisms are  even maps, it follows that  the group $G=\GL_m(\C)\oplus\GL_n(\C)$ acts on $\LS$ by ``change of basis'': $g\cdot\[x,y\]=g\[g^{-1}x,g^{-1}y\]$,  for $g\in G$ and $x,y\in \g$. Observe that the set  of $G$- orbits of this action  are in one to one correspondence with the isomorphism classes in $\LS$.

\section{Degenerations on the variety $\LS$}

We recall some known facts for irreducible varieties. A nonempty algebraic set is said to be irreducible if it cannot be written as the union of two proper, nonempty, algebraic subsets.  Also, every nonempty algebraic set  $Y$ can be written  as  the union of a  finite number of  maximal irreducible algebraic subsets  $X = X_1\cup\dots\cup X_r$; these are called the {\bf irreducible components} of $X$. Notice that such decomposition is unique.

As a first step  for finding such  irreducible components we use the following result:

\begin{lemma}
If $G$ is a connected algebraic group acting on a variety $X$, then the irreducible components of $X$ are stable under the action of $G$. Moreover, the irreducible components of the variety $X$ are closures of single orbits or closures of infinite families of orbits.
\end{lemma}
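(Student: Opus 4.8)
The plan is to establish the two assertions in turn, using only the topological and group-theoretic properties of connected algebraic groups acting on varieties. First I would show that the irreducible components of $X$ are $G$-stable. Let $X = X_1 \cup \dots \cup X_r$ be the (unique) decomposition into irreducible components, and fix $g \in G$. Since $g$ acts on $X$ by a homeomorphism (indeed an automorphism of the variety), the image $g \cdot X_i$ is again an irreducible closed subset of $X$, and $X = \bigcup_i g\cdot X_i$. Hence each $g\cdot X_i$ is contained in some component $X_{\sigma(i)}$, and by applying the same argument to $g^{-1}$ one sees that $g\cdot X_i = X_{\sigma(i)}$, so $g$ permutes the components. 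This defines, for each $g$, a permutation $\sigma_g$ of $\{1,\dots,r\}$, and the assignment $g \mapsto \sigma_g$ is a group homomorphism $G \to S_r$.

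The key point is now connectedness. The stabilizer $\Stab(X_i) = \{ g \in G : g \cdot X_i = X_i\}$ is a closed subgroup of $G$ of finite index (its cosets are in bijection with a subset of the $G$-orbit of $X_i$ inside the finite set of components), hence it is also open; since $G$ is connected and $\Stab(X_i)$ is a nonempty open-and-closed subgroup containing the identity, we get $\Stab(X_i) = G$. Therefore every component is $G$-stable, which is the first claim. (Equivalently: the image of the homomorphism $G \to S_r$ is a connected subgroup of a finite discrete group, hence trivial.)

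For the second assertion, fix an irreducible component $X_i$. Because $X_i$ is $G$-stable, it is a union of $G$-orbits. Consider a maximal-dimensional orbit $\mathcal{O} = G\cdot x$ contained in $X_i$ — such an orbit exists since orbit dimensions are bounded by $\dim G$ (or by $\dim X_i$). The orbit $\mathcal{O}$ is a locally closed, irreducible, $G$-stable subset of $X_i$, so its Zariski closure $\overline{\mathcal{O}}$ is an irreducible closed $G$-stable subset of $X_i$. If $\overline{\mathcal{O}} = X_i$, then $X_i$ is the closure of the single orbit $\mathcal{O}$. Otherwise $\overline{\mathcal{O}}$ is a proper closed subset, and $X_i$ is the union of $\mathcal{O}$ with the orbits contained in $X_i \setminus \mathcal{O}$; iterating over the finitely many possible orbit dimensions, or invoking that an irreducible variety is the closure of any dense constructible subset, one concludes that $X_i$ is the closure of a family of orbits of the same (maximal) dimension, and such a family is infinite whenever it is not a single orbit, since a finite union of orbits of a fixed dimension $<\dim X_i$ is a proper closed subset.

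The main obstacle is the second statement, where the phrase ``infinite families of orbits'' needs care: one must rule out $X_i$ being a finite union of orbits of top dimension other than a single one. This follows because each such orbit is open in $X_i$ (being of maximal dimension among orbits in an irreducible $G$-variety, its closure is $X_i$ and its complement is a lower-dimensional closed set), and an irreducible variety cannot be a disjoint union of two or more nonempty open sets. Hence either a single orbit is dense, or the top-dimensional orbits form an infinite family sweeping out a dense open subset of $X_i$; in both cases $X_i$ is the closure of that orbit or family. The rest is standard point-set reasoning about the Zariski topology and requires no computation.
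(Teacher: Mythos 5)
The paper offers no proof of this lemma: it is quoted as a standard fact from the literature on varieties of algebra structures (cf.\ \cite{KN}, \cite{BS}), so there is nothing of the authors' to compare your argument with; I can only assess it on its own terms. Your proof follows the standard route and is essentially correct, but two points need attention. First, in the stability argument you assert without justification that $\Stab(X_i)$ is closed (your parenthetical only establishes finite index). The clean reason is that each set $\{g\in G:\ g\cdot X_i\subseteq X_j\}$ is closed, being the intersection over $x\in X_i$ of the preimages of the closed set $X_j$ under the orbit morphisms $g\mapsto g\cdot x$; since $g\cdot X_i$ is irreducible it lies in some component, so $G$ is the finite union of these closed sets, and connectedness (indeed irreducibility) of $G$ forces $G=\{g:\ g\cdot X_i\subseteq X_i\}$, whence equality by applying $g^{-1}$. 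Second, and more substantively, the parenthetical in your final paragraph is false as stated: an orbit of maximal dimension in an irreducible $G$-variety need \emph{not} have closure equal to $X_i$. Take $\GL_1(\C)$ acting on $\C^2$ by scaling: every punctured line through the origin is an orbit of maximal dimension $1$, yet its closure is a line, not $\C^2$ (this example also shows why the ``infinite family'' alternative in the lemma is genuinely needed). What your argument actually requires is only the conditional claim: \emph{if} $X_i$ is a finite union of orbits, then for a top-dimensional orbit $\mathcal{O}$ the complement $X_i\setminus\mathcal{O}$ is the union of the closures of the finitely many remaining orbits — none of which meets $\mathcal{O}$, because by the closed orbit lemma the boundary of an orbit consists of orbits of strictly smaller dimension — hence closed, so $\mathcal{O}$ is open; irreducibility then forbids two disjoint nonempty open orbits, so a finite union of orbits contains a unique dense orbit. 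With that repair the dichotomy (closure of a single orbit, or closure of an infinite family) follows as you indicate, the density of the union of top-dimensional orbits in the infinite case coming from lower semicontinuity of $x\mapsto\dim\mathcal{O}(x)$.
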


\begin{definition}\label{def:rigid}
An element $x\in X$ is called \textit{rigid}, if its orbit $\mathcal{O}(x)$ is open in $X$.
\end{definition}

Rigid elements of the variety are important due to the fact that if $x$ is rigid in $X$, then there exists an irreducible component $\mathcal{C}$ such that $\mathcal{C}\cap\mathcal{O}(x)$ is a non-empty open in $\mathcal{C}$ and thus $\mathcal{C}\subset\overline{\mathcal{O}(\g)}$.

\bigskip

Consider now the variety $\LS$. Given two Lie superalgebras $\g$ and $\h$, we say that  $\g$ \emph{degenerates} to $\h$, denoted by $\g\rightarrow \h$,  if $\h$ lies in the Zariski closure of the $G$-orbit $\mathcal{O}(\g)$, where $G=\GL_m(\C)\oplus\GL_n(\C)$. 
Since each orbit $\mathcal{O}(\g)$ is a constructible set, its closures relative to the Euclidean and the Zariski topologies are the same  (see \cite{M}, 1.10  Corollary 1, p. 84).  As a  consequence  the following  is obtained.

\begin{lemma}
Let $\C(t)$ be the field of fractions of the polynomial ring $\C[t]$. If there exists an operator $g_t\in\GL_m(\C(t))\oplus\GL_n(\C(t))$ such that $\displaystyle\lim_{t\rightarrow 0}g_t\cdot\g=\h$, then $\g\rightarrow\h$.
\label{def-equiv-defor}
\end{lemma}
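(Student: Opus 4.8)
The plan is to reduce the statement to the standard fact that a constructible orbit has coinciding Zariski and Euclidean closures, which is already cited in the excerpt, and then to identify the Euclidean limit with a point of that closure. First I would observe that the hypothesis gives a curve in the orbit: the assignment $t \mapsto g_t \cdot \g$ defines, for $t$ in a punctured neighbourhood of $0$ in $\C$, a map into $\mathcal{O}(\g) \subset \LS$, because for each such $t$ the operator $g_t$ actually lies in $\GL_m(\C) \oplus \GL_n(\C)$ (its entries are rational functions of $t$, defined and invertible at $t$), so $g_t \cdot \g$ is genuinely in the $G$-orbit of $\g$. Thus the image of this curve is contained in $\mathcal{O}(\g)$.

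Next I would take the Euclidean limit. By hypothesis $\lim_{t \to 0} g_t \cdot \g = \h$ in $\C^{m^3 + 2nm^2}$, so $\h$ is a limit point, in the Euclidean topology, of a sequence of points lying in $\mathcal{O}(\g)$ (pick any sequence $t_k \to 0$ with $t_k$ in the domain of definition). Hence $\h$ belongs to the Euclidean closure of $\mathcal{O}(\g)$. Since $\mathcal{O}(\g)$ is a constructible set — it is the image of the morphism $G \to \LS$, $g \mapsto g \cdot \g$, and images of morphisms of varieties are constructible by Chevalley's theorem — the Euclidean and Zariski closures of $\mathcal{O}(\g)$ coincide, as recalled in the excerpt from \cite{M}. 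Therefore $\h$ lies in the Zariski closure $\overline{\mathcal{O}(\g)}$, which by definition means exactly $\g \to \h$.

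The only genuinely delicate point is making precise that the rational curve $t \mapsto g_t \cdot \g$ lands inside the orbit rather than merely inside $\LS$: one must check that $g_t$, a priori only an element of $\GL_m(\C(t)) \oplus \GL_n(\C(t))$, specializes to an honest invertible complex matrix for all but finitely many $t$ near $0$ (the finitely many exceptions being the poles of the entries and the zeros of the two determinants). Granting that, and that $\LS$ is closed so the limit $\h$ is again a Lie superalgebra, the argument is a direct invocation of Chevalley's theorem together with the constructibility principle already quoted; no further computation is needed.
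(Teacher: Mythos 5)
Your argument is correct and is essentially the same as the paper's: the lemma is presented there as an immediate consequence of the fact that the orbit $\mathcal{O}(\g)$ is constructible, so its Euclidean and Zariski closures coincide, and the Euclidean limit $\h$ therefore lies in $\overline{\mathcal{O}(\g)}$. Your additional care about $g_t$ specializing to an invertible matrix for all but finitely many $t$ is a correct (if routine) point that the paper leaves implicit.
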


Given $\g=\g_0\oplus\g_1 \in \LS$, recall the identification of  $\g$ with its set of structure cons\-tants, $\g=\left\{c_{ij}^k,\rho_{ij}^k,\Gamma_{i,j}^k\right\}$. From the previous Lemma,  it follows that  every Lie superalgebra $\g\in\LS$ degenerates to the Lie superalgebra $\mathfrak{a}=\{0,0,0\}$. In fact, take  $g_t=t^{-1}\left(I_{m}\oplus I_{n}\right)$, where $I_{k}$ is the identity map in $\GL_{k}(\C)$,  then  $\displaystyle\lim_{t\rightarrow 0}g_t\cdot\g=\frak a$. Thus, $ \g \rightarrow  \mathfrak{a}$. 

\subsection{Invariants}

Next we list the invariants for the variety $\LS$ that will be used to obtain non-degenerations in the variety. Most of these invariants have already been used in the study of the variety of Lie algebras. Given  $\g \in \LS$, we use the notation  $\Gamma_\g$ for $\left(\Gamma_{ij}^k\right)$. On the other hand,  $\mathfrak{D}(\alpha,\beta,\gamma)(\g)$ denotes the space of $(\alpha,\beta,\gamma)$-derivations for the Lie superalgebra $\g$ (see \cite{ZZ} for more details).

We define the  map  $\mathcal{A}:\LS\rightarrow \LS$ by $\left\{c_{ij}^k,\rho_{ij}^k,\Gamma_{i,j}^k\right\}\mapsto\left\{c_{ij}^k,0,0\right\}$. Notice that the map $\mathcal{A}$ is a morphism that `forgets' the Lie superalgebra structure on $\g$  and retrieves the Lie (super)algebra $\h=\g_0\oplus\C^n  
\hookrightarrow \LS$. It is easy to see  that, the map $\mathcal{A}$ is $G$-equivariant, that is, 
 $\mathcal{A}\left(g\cdot\left\{c_{ij}^k,\rho_{ij}^k,\Gamma_{ij}^k\right\}\right)\\=g\cdot \mathcal{A}\left(\left\{c_{ij}^k,\rho_{ij}^k,\Gamma_{ij}^k\right\}\right)$, for  $g\in G$.  Moreover, $$\mathcal{A}\left(\overline{G\cdot\left\{c_{ij}^k,\rho_{ij}^k,\Gamma_{ij}^k\right\}}\right)=\overline{G\cdot\left\{c_{ij}^k,0,0\right\}}. $$
As  a consequence we obtain the following  useful result:

\begin{lemma}\label{lem:deg_alg}
Let $ \g=\g_0\oplus\g_1,\ \h=\h_0\oplus\h_1 \in \LS$. If $\g \rightarrow\h$,  then  $\g_0\rightarrow\h_0$.
\end{lemma}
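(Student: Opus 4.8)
The plan is to deduce this from the $G$-equivariance of the morphism $\mathcal{A}$ already established just above the statement. Recall that $\mathcal{A}:\LS\to\LS$ sends $\{c_{ij}^k,\rho_{ij}^k,\Gamma_{ij}^k\}$ to $\{c_{ij}^k,0,0\}$, and that $\mathcal{A}$ is a morphism of varieties which is $G$-equivariant, together with the key identity $\mathcal{A}\bigl(\overline{G\cdot\g}\bigr)=\overline{G\cdot\mathcal{A}(\g)}$. So first I would observe that saying $\g\to\h$ means precisely $\h\in\overline{\mathcal{O}(\g)}=\overline{G\cdot\g}$. Applying $\mathcal{A}$ to this containment and using the displayed identity gives $\mathcal{A}(\h)\in\mathcal{A}\bigl(\overline{G\cdot\g}\bigr)=\overline{G\cdot\mathcal{A}(\g)}$, i.e.\ $\mathcal{A}(\g)\to\mathcal{A}(\h)$ in $\LS$.

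Next I would identify what this degeneration says at the level of the even parts. By definition of $\mathcal{A}$, the image $\mathcal{A}(\g)=\{c_{ij}^k,0,0\}$ is the Lie superalgebra $\g_0\oplus\C^n$ with trivial odd bracket, i.e.\ it is the Lie algebra $\g_0$ sitting inside $\LS$ as $\g_0\oplus\C^n$ (abelian odd part, zero action). The $G=\GL_m(\C)\oplus\GL_n(\C)$-orbit of such a point is essentially the $\GL_m(\C)$-orbit of $\g_0$ in the (ordinary) variety of $m$-dimensional Lie algebras, crossed trivially with the $\GL_n$-direction which acts trivially on the zero structure constants $\rho,\Gamma$. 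Hence $\mathcal{A}(\g)\to\mathcal{A}(\h)$ translates exactly into $\g_0\to\h_0$ as Lie algebras: the limiting operator $g_t$ realizing the $\LS$-degeneration, restricted to its $\GL_m(\C(t))$-component, realizes the Lie-algebra degeneration $\g_0\to\h_0$ (and conversely any $\GL_m$-degeneration extends by the identity on the $\GL_n$-factor).

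The step needing the most care is making the last identification precise: one must check that the $\GL_m(\C)\oplus\GL_n(\C)$-orbit (and its closure) of a point of the form $\{c_{ij}^k,0,0\}$ projects, under the coordinate projection forgetting the $\rho$ and $\Gamma$ blocks, onto the $\GL_m(\C)$-orbit (and its closure) of $\{c_{ij}^k\}$ in the variety of $m$-dimensional Lie algebras, and that this projection is compatible with Zariski closures. This is routine because the $\GL_n(\C)$-factor acts trivially on the zero blocks and the projection is a morphism with an obvious section, but it should be spelled out. Once that is in place, $\mathcal{A}(\g)\to\mathcal{A}(\h)$ is literally $\g_0\to\h_0$, completing the proof. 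Alternatively, and perhaps more cleanly, I would bypass $\mathcal{A}$ and argue directly via Lemma \ref{def-equiv-defor}: if $g_t=(\phi_t,\psi_t)\in\GL_m(\C(t))\oplus\GL_n(\C(t))$ satisfies $\lim_{t\to 0}g_t\cdot\g=\h$, then comparing the $c_{ij}^k$ structure constants on both sides shows $\lim_{t\to 0}\phi_t\cdot\g_0=\h_0$, since the even-even bracket of $g_t\cdot\g$ depends only on $\phi_t$ and the $c_{ij}^k$ of $\g$; this gives $\g_0\to\h_0$ directly. I would present the short $\mathcal{A}$-based argument as the main proof and note the direct computation as the underlying reason.
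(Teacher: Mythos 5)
Your main argument is exactly the one the paper intends: Lemma \ref{lem:deg_alg} is stated as an immediate consequence of the $G$-equivariance of $\mathcal{A}$ and the identity $\mathcal{A}\bigl(\overline{G\cdot\g}\bigr)=\overline{G\cdot\mathcal{A}(\g)}$, and your application of $\mathcal{A}$ to the containment $\h\in\overline{\mathcal{O}(\g)}$ (which only needs the inclusion $\mathcal{A}\bigl(\overline{G\cdot\g}\bigr)\subseteq\overline{G\cdot\mathcal{A}(\g)}$, valid for any continuous map) is correct. Your extra care in identifying the degeneration $\mathcal{A}(\g)\rightarrow\mathcal{A}(\h)$ with $\g_0\rightarrow\h_0$, and the alternative direct computation with $g_t=(\phi_t,\psi_t)$, are sound but amount to the same proof.
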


Also, we have the following relations:

\begin{lemma}\label{lem:invariants}
Let $\g,\h\in\LS$. If $\g\rightarrow\h$,  then the following relations must hold:
\begin{enumerate}[(i)]
\item $\dim \mathcal{O}(\g)>\dim\mathcal{O}(\h)$.
\item If $\Gamma_\g\equiv 0$ then $\Gamma_\h\equiv 0$.
\item $\dim(\g^1)_i\geq\dim(\h^1)_i$ for $i\in\Z_2$, where $\g^1=[\g,\g]$.
\item $\dim \mathfrak{D}(\alpha,\beta,\gamma)_i(\g)\leq\dim \mathfrak{D}(\alpha,\beta,\gamma)_i(\h)$,  for $i\in\Z_2$.
\item Let $\mathfrak{B}$ and $\mathfrak{B}'$ be fixed bases for $\g$ and $\h$, respectively. If $\tr\ad(x)=0$ for every $x\in\mathfrak{B}$,  then $\tr\ad(y)=0$,  for every $y\in\mathfrak{B}'$.
\end{enumerate}
\end{lemma}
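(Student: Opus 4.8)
The plan is to verify each of the five relations using the basic degeneration tools already assembled, namely Lemma~\ref{def-equiv-defor} (degeneration via a one-parameter operator $g_t$), Lemma~\ref{lem:deg_alg} (degeneration of the even part), and the $G$-equivariance of the map $\mathcal{A}$. Throughout, fix $g_t \in \GL_m(\C(t)) \oplus \GL_n(\C(t))$ with $\lim_{t\to 0} g_t \cdot \g = \h$, and use that $\mathcal{O}(\h) \subset \overline{\mathcal{O}(\g)}$.

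For item (i), recall that $\overline{\mathcal{O}(\g)}$ is an irreducible variety containing $\mathcal{O}(\h)$; if $\h \notin \mathcal{O}(\g)$, then $\mathcal{O}(\h)$ lies in the boundary $\overline{\mathcal{O}(\g)} \setminus \mathcal{O}(\g)$, which is a proper closed subset and hence has strictly smaller dimension than $\overline{\mathcal{O}(\g)}$; since $\dim \overline{\mathcal{O}(\g)} = \dim \mathcal{O}(\g)$ and similarly for $\h$, the strict inequality follows. For item (ii), observe that $\{\g : \Gamma_\g \equiv 0\}$ is a $G$-stable closed subvariety of $\LS$ (it is cut out by the vanishing of the coordinate functions $\Gamma_{ij}^k$, and $G$-stability is immediate from the action formula), so if $\g$ lies in it then so does every point of $\overline{\mathcal{O}(\g)}$, in particular $\h$. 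For item (iii), the map sending a structure-constant array to the dimension of the $i$-graded piece of the derived subalgebra is upper semicontinuous: $(\g^1)_i$ is the image of a linear map whose matrix entries are polynomials (indeed linear) in the structure constants, so its rank can only drop under specialization; applying this along the curve $t \mapsto g_t\cdot\g$ and passing to the limit gives the inequality. Item (iv) is the analogous semicontinuity statement for the spaces $\mathfrak{D}(\alpha,\beta,\gamma)_i(\g)$, which are kernels of linear maps depending polynomially on the structure constants, so their dimensions are lower semicontinuous; again evaluate along the curve and take $t \to 0$. For item (v), the condition ``$\tr \ad(x) = 0$ for all basis vectors $x$'' is equivalent to the vanishing of finitely many polynomial (linear) functions of the structure constants, and it is a basis-independent (hence $G$-stable) condition because the span of the $\ad$-traces is intrinsic; thus it cuts out a $G$-stable closed subset, and the argument of item (ii) applies verbatim.

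The only mild subtlety — and the step I would be most careful about — is the semicontinuity arguments in (iii) and (iv): one must note that rank is lower semicontinuous and nullity upper semicontinuous on the space of matrices, then compose with the polynomial map $\g \mapsto (\text{relevant linear operator})$ and restrict to the orbit closure. Since all the operators in question (the bracket with a fixed element, the defining equations of $(\alpha,\beta,\gamma)$-derivations) have entries that are polynomials in $\{c_{ij}^k, \rho_{ij}^k, \Gamma_{ij}^k\}$, these compositions are morphisms of varieties, and semicontinuity is preserved. One point worth spelling out for (iv) is that the grading is respected because $G = \GL_m(\C) \oplus \GL_n(\C)$ acts by even automorphisms, so the even and odd pieces of the derivation space transform separately and each dimension is individually semicontinuous. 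With these observations in place, each of (i)--(v) reduces to a one-line application of either ``$G$-stable closed set'' or ``semicontinuity along the degeneration curve.''
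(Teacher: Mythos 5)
Your proof is correct and follows essentially the same route as the paper's (which simply invokes the Closed Orbit Lemma for (i) and the closedness of suitable $G$-stable sets together with semicontinuity for (ii)--(v)); you merely supply the details the paper leaves implicit. One wording slip worth fixing: in the body of item (iv) you call the dimensions of the kernels ``lower semicontinuous,'' whereas the direction needed for $\dim \mathfrak{D}(\alpha,\beta,\gamma)_i(\g)\leq\dim \mathfrak{D}(\alpha,\beta,\gamma)_i(\h)$ is that nullity is \emph{upper} semicontinuous (it can only jump up at the limit point), which is exactly what you state correctly in your closing paragraph.
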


\begin{proof}
Item $(i)$  follows by the Closed Orbit Lemma (see \cite{B} I. Lemma 1.8, p. 53). The remainning follow by proving that the corresponding sets are closed, using in some cases the upper semi continuity of appropriated functions (see \cite{C}, \S 3 Theorem 2, p. 14).
\end{proof}

\begin{definition}
Let $\g=\left\{c_{ij}^k, \rho_{ij}^k,\Gamma_{ij}^k\right\}$ be a Lie superalgebra in $\LS$.
\begin{enumerate}
\item The 
Lie superalgebra $\ab(\g)$ is  defined by $\ab(\g)= \left\{0, 0 ,\Gamma_{ij}^k\right\} $.
\item The 
Lie (super)algebra $\mathcal{F}(\g)$ is defined by $\mathcal{F}(\g) = \left\{c_{ij}^k,\rho_{ij}^k, 0 \right\}$. 
\end{enumerate}
\end{definition}

\begin{lemma}\label{lem:abel}
Let $\g,\h\in\LS$. If $\g\rightarrow\h$,  then 
\begin{enumerate}[(i)]
\item $\ab(\g)\rightarrow\ab(\h)$, 
\item $\mathcal{F}(\g)\rightarrow \mathcal{F}(\h)$. 
\end{enumerate}
\end{lemma}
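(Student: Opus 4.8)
The plan is to treat $\ab$ and $\mathcal{F}$ exactly as the map $\mathcal{A}$ was treated above: exhibit each as a $G$-equivariant morphism $\LS\to\LS$, and then push the degeneration forward through it. First I would check that the two truncation maps genuinely land in $\LS$, i.e.\ that the truncated families of structure constants still satisfy super skew-symmetry and the super Jacobi identity. Super skew-symmetry is immediate, since in both cases we merely keep a subset of the structure constants of $\g$ (which already satisfy it) and set the remaining brackets to $0$. For the super Jacobi identity one checks the four homogeneous types of triples, $(e,e,e)$, $(e,e,f)$, $(e,f,f)$, $(f,f,f)$, separately. For $\mathcal{F}(\g)=\{c_{ij}^k,\rho_{ij}^k,0\}$ the $(e,e,e)$ and $(e,e,f)$ identities involve only the $c$'s and $\rho$'s, hence hold because they hold for $\g$; the $(e,f,f)$ and $(f,f,f)$ identities each contain a bracket of the form $[f_i,f_j]$, which is now $0$, so every term vanishes. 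Dually, for $\ab(\g)=\{0,0,\Gamma_{ij}^k\}$ all nontrivial brackets take values in $\g_0=\Span\{e_k\}$ and $[\g_0,\g]=0$, so every iterated bracket appearing in the super Jacobi identity is $0$. Hence $\ab(\g),\mathcal{F}(\g)\in\LS$, and both maps are coordinate projections, so they are morphisms of affine varieties.

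Next I would verify $G$-equivariance. Writing $g=(g_0,g_1)\in\GL_m(\C)\oplus\GL_n(\C)$, the action $g\cdot\[x,y\]=g\[g^{-1}x,g^{-1}y\]$ transforms the three blocks of structure constants independently: the new $c$'s depend only on the old $c$'s, the new $\rho$'s only on the old $\rho$'s, and the new $\Gamma$'s only on the old $\Gamma$'s, because $[g^{-1}e_i,g^{-1}e_j]$ uses only the $\g_0$-bracket, $[g^{-1}e_i,g^{-1}f_j]$ only the $\g_0$-action on $\g_1$, and $[g^{-1}f_i,g^{-1}f_j]$ only the $[\g_1,\g_1]$-bracket. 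Consequently the condition $c=\rho=0$ (defining the image of $\ab$) and the condition $\Gamma=0$ (defining the image of $\mathcal{F}$) are each preserved by the $G$-action, so $\ab(g\cdot\g)=g\cdot\ab(\g)$ and $\mathcal{F}(g\cdot\g)=g\cdot\mathcal{F}(\g)$.

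Finally I would run the standard closure argument, verbatim as for $\mathcal{A}$. If $\g\rightarrow\h$ then $\h\in\overline{\mathcal{O}(\g)}$. For a $G$-equivariant morphism $\phi\in\{\ab,\mathcal{F}\}$ one has $\phi(\mathcal{O}(\g))=\mathcal{O}(\phi(\g))$, and since $\phi$ is continuous, $\phi(\h)\in\phi\left(\overline{\mathcal{O}(\g)}\right)\subseteq\overline{\phi(\mathcal{O}(\g))}=\overline{\mathcal{O}(\phi(\g))}$, which is precisely $\phi(\g)\rightarrow\phi(\h)$. Taking $\phi=\ab$ gives (i) and $\phi=\mathcal{F}$ gives (ii).

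I expect the only point needing genuine (though still routine) care to be showing that the truncations land back in $\LS$, namely the case analysis on the super Jacobi identity indicated above; once that is in place, the equivariance and closure steps are an exact repetition of the reasoning already used for $\mathcal{A}$ and Lemma \ref{lem:deg_alg}.
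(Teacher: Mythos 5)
Your proof is correct, but it takes a genuinely different route from the paper's. The paper's proof starts by extracting from $\g\rightarrow\h$ a one-parameter family $g_t\in\GL_m(\C(t))\oplus\GL_n(\C(t))$ with $\displaystyle\lim_{t\to 0}g_t\cdot\g=\h$ (i.e.\ it uses the converse of Lemma \ref{def-equiv-defor}, which is standard over $\C$ but is not proved in the paper), and then observes that, since $g_t$ preserves the grading, the limits of the brackets $g_t\[g_t^{-1}f_i,g_t^{-1}f_j\]_{\g}$ alone yield the structure constants of $\ab(\h)$, while the limits of the $e$--$e$ and $e$--$f$ brackets yield those of $\mathcal{F}(\h)$; hence the same family $g_t$ realizes both $\ab(\g)\rightarrow\ab(\h)$ and $\mathcal{F}(\g)\rightarrow\mathcal{F}(\h)$. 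You instead replicate, for $\ab$ and $\mathcal{F}$, the paper's earlier treatment of the map $\mathcal{A}$ that underlies Lemma \ref{lem:deg_alg}: you verify that the truncations land in $\LS$ and are $G$-equivariant morphisms, and then push orbit closures through them by continuity. Both arguments are valid, and both ultimately rest on the same structural fact, namely that the $G$-action transforms the three blocks $c$, $\rho$, $\Gamma$ of structure constants independently. What your version buys is that it avoids the unjustified appeal to the existence of a realizing curve $g_t$ and works directly with Zariski closures; what the paper's version buys is brevity, since it never needs to check that $\ab(\g)$ and $\mathcal{F}(\g)$ satisfy the super Jacobi identity (that is implicitly absorbed into the definition of $\ab$ and $\mathcal{F}$ as Lie superalgebras). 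Your explicit case analysis of the four homogeneous triple types, and your observation that equivariance follows from the block-independence of the action, are both correct as stated.
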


\begin{proof}
Let $\{e_1,\dots,e_m,f_1,\dots,f_n\}$ be an homogeneous basis for the underlying supervector space $V=V_0\oplus V_1$ of $\g$ and $\h$. Since  $\g\rightarrow\h$  it follows that there exists a map $g_t\in\GL_m(\C(t))\oplus\GL_n(\C(t))$ such that $\displaystyle\lim_{t\rightarrow 0}g_t\cdot{\g}={\h}$. In particular
\begin{enumerate}[(i)]
\item $\displaystyle\lim_{t\rightarrow 0}g_t\[g_t^{-1}f_i,g_t^{-1}f_j\]_{\g}=\[f_i,f_j\]_{\h}$. Hence,  $\ab(\g)\rightarrow\ab(\h)$.
\item $\displaystyle\lim_{t\rightarrow 0}g_t\[g_t^{-1}e_i,g_t^{-1}e_j\]_{\g}=\[e_i,e_j\]_{\h}\,$ and $\,  \displaystyle\lim_{t\rightarrow 0}g_t\[g_t^{-1}e_i,g_t^{-1}f_j\]_{\g}=\[e_i,f_j\]_{\h}$. Therefore,  $\mathcal{F}(\g)\rightarrow \mathcal{F}(\h)$.
\end{enumerate}
\end{proof}
%
%

{\bf The $(i,j)$-invariant.} Let $\g \in \LS$ and $(i,j)$ be a pair of positive integers. Let 
$$c_{i,j}=\frac{\tr(\ad x)^i\tr(\ad y)^j}{\tr\left((\ad x)^i\circ(\ad y)^j\right)}$$
be a quotient of two polynomials in the structure constants of $\g$, for all $x,y\in\g$ such that 
both polynomials are not zero. If  $c_{i,j}$ is independent of the choice of $x$ and $y$,  then it is an invariant called the {\bf $(i,j)$-invariant of $\g$}.

\begin{lemma}\label{lemma:ij-inv}
Let $\g,\h\in\LS$. If $\g\rightarrow\h$ and the $(i,j)$-invariant exists for both Lie superalgebras, then $\g$ and $\h$ have the same $(i,j)$-invariant.
\end{lemma}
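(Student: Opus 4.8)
The plan is to mimic the proof of Lemma~\ref{lem:abel}: the $(i,j)$-invariant, when it exists, is a constant value taken by a certain rational function on the structure-constants space, and this value is preserved under the $G$-action and hence under passage to the Zariski closure of an orbit. Concretely, suppose $\g \rightarrow \h$, so by Lemma~\ref{def-equiv-defor}-type reasoning there is $g_t \in \GL_m(\C(t)) \oplus \GL_n(\C(t))$ with $\displaystyle\lim_{t\rightarrow 0} g_t \cdot \g = \h$. First I would record the elementary fact that the $(i,j)$-invariant is a genuine isomorphism invariant: if $g \in G$ and $\g' = g \cdot \g$, then $\ad_{\g'}(gx) = g\, \ad_{\g}(x)\, g^{-1}$, so traces of powers and composites of adjoint operators are unchanged, $\tr(\ad_{\g'}(gx))^i = \tr(\ad_{\g}(x))^i$ and $\tr((\ad_{\g'}(gx))^i \circ (\ad_{\g'}(gy))^j) = \tr((\ad_{\g}(x))^i \circ (\ad_{\g}(y))^j)$; since $x \mapsto gx$ is a bijection of $\g$, the set of values of $c_{i,j}$ is the same for $\g$ and $\g'$, and in particular if $\g$ has a well-defined $(i,j)$-invariant so does $g\cdot\g$, with the same value. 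This already shows $c_{i,j}$ is constant along each $G$-orbit.

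Next I would promote this to the closure. Fix a basis $\{e_1,\dots,e_m,f_1,\dots,f_n\}$ and, for the $(i,j)$-invariant of $\g$, pick homogeneous $x_0,y_0$ (say basis vectors or fixed combinations of them) for which $\tr(\ad x_0)^i$, $\tr(\ad y_0)^j$, and $\tr((\ad x_0)^i \circ (\ad y_0)^j)$ are all nonzero when evaluated on $\g$; denote by $c$ the common value $c_{i,j}(\g)$. Working over $\C(t)$, apply the $G$-orbit computation above to $g_t$: the function $P(\mathfrak{k}) := c \cdot \tr((\ad x_0)^i\circ(\ad y_0)^j)(\mathfrak{k}) - \tr(\ad x_0)^i(\mathfrak{k})\,\tr(\ad y_0)^j(\mathfrak{k})$ is a polynomial in the structure constants of $\mathfrak{k}$ that vanishes identically on $\mathcal{O}(\g)$ — indeed on $g_t\cdot\g$ for the symbolic $t$, hence on the whole orbit by the change-of-basis invariance combined with the fact that $P$ vanishes at $\g$. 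Since $P$ is continuous (polynomial), it vanishes on $\overline{\mathcal{O}(\g)}$, and in particular at $\h$: $P(\h) = 0$. It remains to check the nondegeneracy at $\h$, i.e. that $\tr((\ad x_0)^i\circ(\ad y_0)^j)(\h) \neq 0$ — but this I would handle by choosing $x_0,y_0$ more carefully, or rather by the observation that since the $(i,j)$-invariant is assumed to exist for $\h$, it is independent of the choice of $x,y$ among the admissible pairs, so it suffices to evaluate at \emph{any} admissible pair; taking the limit $g_t^{-1}e_1,\dots$ of admissible pairs for $\g$ yields an admissible pair for $\h$ (after discarding the possibly-trivial cases), and on it the value is $c$ by the continuity argument.

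The main obstacle I anticipate is precisely this last bookkeeping about the denominator: a pair $(x_0,y_0)$ that is admissible (all three polynomials nonzero) for $\g$ might degenerate under $g_t$ so that the denominator $\tr((\ad x_0)^i\circ(\ad y_0)^j)$ vanishes in the limit $\h$, making the naive ``take limits of the quotient'' argument fail. The clean way around this, which I would adopt, is to never divide: work entirely with the polynomial relation $P(\mathfrak{k}) = 0$ above, show it holds identically on $\overline{\mathcal{O}(\g)}$ hence at $\h$, and then separately invoke the hypothesis that the $(i,j)$-invariant of $\h$ exists with \emph{some} value $c'$, meaning there is \emph{some} admissible pair $(x',y')$ for $\h$; the identity $P(\h)=0$ evaluated via the polynomials in the definition of $c'$ — together with the fact that for $\h$ the ratio is pair-independent — forces $c' = c$. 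A minor secondary point is making sure the same pair can be used to certify both invariants; if the admissible pairs for $\g$ and $\h$ differ, one passes through the pair-independence on each side, so this is only notational. Everything else (the trace conjugation identities, polynomiality, continuity, vanishing on Zariski closure) is routine and parallels the proofs of Lemma~\ref{lem:invariants} and Lemma~\ref{lem:abel}.
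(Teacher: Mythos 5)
The paper offers no proof of this lemma at all (for the sibling Lemma~\ref{lem:invariants} it only says that ``the corresponding sets are closed''), so there is no argument to compare against line by line; your strategy --- replace the quotient by the polynomial relation $c\,D=N$, show it cuts out a closed $G$-stable subset containing $\mathcal{O}(\g)$ and hence containing $\h$, then use the existence of an admissible pair for $\h$ to recover the value --- is the standard and correct one, and your decision to ``never divide'' is exactly the right way to handle the denominator degenerating in the limit. This is consistent with the closed-set philosophy the paper invokes for its other invariants.

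One step should be tightened. Writing $N_{\mathfrak k}(x,y)=\tr(\ad x)^i\,\tr(\ad y)^j$ and $D_{\mathfrak k}(x,y)=\tr\bigl((\ad x)^i\circ(\ad y)^j\bigr)$ computed in $\mathfrak k$, you claim that $P(\mathfrak k)=c\,D_{\mathfrak k}(x_0,y_0)-N_{\mathfrak k}(x_0,y_0)$ vanishes on all of $\mathcal{O}(\g)$ ``by change-of-basis invariance combined with the fact that $P$ vanishes at $\g$.'' Unwinding the conjugation identity gives $P(g\cdot\g)=c\,D_{\g}(g^{-1}x_0,g^{-1}y_0)-N_{\g}(g^{-1}x_0,g^{-1}y_0)$, and the pair $(g^{-1}x_0,g^{-1}y_0)$ need not be admissible for $\g$, in which case the defining property of the invariant says nothing about it. The fix is to note that the admissible pairs form a nonempty Zariski-open, hence dense, subset of the affine space $\g\times\g$, so the polynomial $c\,D_{\g}-N_{\g}$ in $(x,y)$, vanishing on that dense set, vanishes identically; equivalently, the closed condition to impose on $\mathfrak k$ is ``$c\,D_{\mathfrak k}(x,y)=N_{\mathfrak k}(x,y)$ for all $x,y$,'' cut out by the vanishing of the finitely many coefficients of $c\,D_{\mathfrak k}-N_{\mathfrak k}$ as a polynomial in $(x,y)$, each coefficient being a polynomial in the structure constants. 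With that modification the rest of your argument, including the final evaluation at an admissible pair for $\h$, goes through verbatim, and it also disposes of your ``minor secondary point'' about whether the same pair certifies both invariants.
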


\section{The variety ${\mathcal{LS}}^{(2,2)}$}
Lie superalgebras of dimension 4 over $\R$ were classified in \cite{Back}. However, Backhouse's classification does not include the classification of $(2,2)$-dimensional Lie superalgebras over $\C$. We provide in this section a complete classification, up to isomorphism, of all complex Lie superalgebras of dimension $(2,2)$.  The representatives of the isomorphism classes are denoted by
${\mathcal  LS}_n$,  where $n\in \{ 0, \dots, 19\}$. We write the pro\-ducts for each of them,  in terms of an homogeneous basis $\{ e_1, e_2, f_1, f_2 \} $,  where
$({\mathcal  LS_n})_0 = \Span \{ e_1, e_2 \}$ and ${(\mathcal  LS_n})_1 = \Span \{ f_1, f_2 \}$. Some of these Lie superalgebras depend on one or two  complex parameters, producing non-isomorphic families of Lie superalgebras as the following Theorem shows. 

\begin{theorem}\label{thm:alg_clas}
Let $\g$ be a  complex Lie superalgebra of dimension $(2,2)$. Then, $\g$ is isomorphic to one and only one of the following Lie superalgebras:

$$\begin{array}{llll}
\mathcal {LS}_{0}: &\[\cdot,\cdot \]=0. &&\\
\mathcal {LS}_{1}:& \[ f_1, f_1 \]= e_1, & \[f_2, f_2 \]=e_2.& \\
\mathcal {LS}_{2}:& \[ f_1, f_1 \]= e_1, & \[f_2, f_2 \]=e_1. &\\
\mathcal {LS}_{3}:& \[ f_1, f_1 \]= e_1.&&\\
\mathcal {LS}_{4}: & \[ f_1, f_2 \] = e_1, & \[f_2, f_2\]=e_2.\\
\mathcal {LS}_{5}:& \[ e_1, f_1 \]= f_1,  & \[e_2, f_2 \]=f_2.&   \\
\mathcal {LS}_{6}^{\alpha}:& \[ e_2, f_1 \]= f_1, &  \[e_2, f_2 \]=\alpha f_2.    \\
\mathcal {LS}_{7}:& \[ e_2, f_1 \]= f_1, & \[e_2, f_2 \]=-f_2,  &\[f_1, f_2   \]=e_1.   \\
\mathcal {LS}_{8}:& \[ e_2, f_1 \]= f_1, & \[f_2, f_2 \]=e_1.   &   \\
\mathcal {LS}_{9}:& \[ e_1, f_1 \]= f_1, & \[e_1, f_2 \]=f_2,&  \[e_2, f_2\]=f_1.     \\
\mathcal {LS}_{10}:  & \[e_2, f_1 \]=f_1, & \[ e_2,f_2\]= f_1+f_2.    \\
\mathcal {LS}_{11}: & \[e_2, f_2 \] = f_1. & & \\
\mathcal {LS}_{12}: & \[ e_2, f_2\] = f_1, & \[ f_2, f_2 \] = e_1. \\
\mathcal {LS}_{13}^{\alpha,\beta}: & \[ e_1, e_2 \] =e_1,&    \[ e_2, f_1\] = \alpha f_1, &  \[ e_2, f_2\] = \beta f_2. \\
\mathcal {LS}_{14}^{\alpha}: & \[ e_1, e_2 \] =e_1,&    \[ e_2, f_1\] = \alpha f_1, &  \[ e_2, f_2\] =-(\alpha+1) f_2,    \\&\[ f_1, f_2  \]= e_1,  &   & \\
\mathcal {LS}_{15}^{\alpha}: & \[ e_1, e_2 \] =e_1,&    \[ e_2, f_1\] = \alpha f_1, &  \[ e_2, f_2\] = -\frac{1}{2} f_2, \\
          & \[ f_2, f_2  \]= e_1. & & \\
\mathcal{LS}_{16}^{\alpha}: & \[ e_1, e_2 \] =e_1,&    \[ e_2, f_1\] = \alpha f_1,&  \[ e_2, f_2\] = f_1+ \alpha f_2. \\
\mathcal{LS}_{17}: & \[ e_1, e_2 \] =e_1,&    \[ e_2, f_1\] =  - \frac{1}{2}  f_1, &  \[ e_2, f_2\] = f_1 - \frac{1}{2} f_2. \\
  & \[ f_2, f_2  \]= e_1. &  & \\
\mathcal{LS}_{18}^{\alpha}:  & \[ e_1, e_2 \] =e_1,& \[ e_1, f_2 \]=f_1, &  \[ e_2, f_1\] = \alpha f_1, \\
  &  \[ e_2, f_2\] = (\alpha+1) f_2. &   &    \\
 \mathcal{LS}_{19}:  & \[ e_1, e_2 \] =e_1,& \[ e_1, f_2 \]=f_1, &  \[ e_2, f_1\] = -f_1, \\
  &  \[ f_1, f_2   \]= e_1,   & \[ f_2, f_2  \]= 2e_2, &  \\
\end{array}$$
where $\alpha, \beta \in \C$. Furthermore, $\mathcal {LS}_{n}^{\alpha} \simeq  \mathcal {LS}_{n}^{\alpha'}$ if and only if $\alpha= \alpha'$, for $n\in \{6,15,16, 18\}$; $\mathcal {LS}_{14}^{\alpha} \simeq \mathcal {LS}_{14}^{\alpha'}$ if and only if either $\alpha=\alpha'$ or $\alpha+\alpha'=-1$; and $\mathcal {LS}_{13}^{\alpha,\beta} \simeq \mathcal {LS}_{13}^{\alpha', \beta'}$ if and only if $\{\alpha, \beta\}= \{\alpha', \beta'\}$.
\end{theorem}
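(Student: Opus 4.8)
The plan is to prove Theorem~\ref{thm:alg_clas} by a case analysis organized according to the structure of the even part $\g_0$, then of the odd action, then of the symmetric bracket $\g_1\times\g_1\to\g_0$. First I would recall that there are (up to isomorphism) exactly three complex Lie algebras of dimension $2$: the abelian one $\C^2$, and the non-abelian one $\mathfrak{r}_2$ with $[e_1,e_2]=e_1$. So $\g_0$ is one of these, and the classification splits into the two cases $\g_0$ abelian and $\g_0\cong\mathfrak{r}_2$.

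Next, the data of a $(2,2)$ Lie superalgebra consists of three pieces: the bracket $c$ on $\g_0$ (fixed by the above dichotomy), the even action $\rho:\g_0\to\mathfrak{gl}(\g_1)=\mathfrak{gl}_2(\C)$ given by $x\mapsto\ad x|_{\g_1}$, and the symmetric bilinear map $\Gamma:\Sym^2\g_1\to\g_0$ with $\Gamma(f_i,f_j)=\[f_i,f_j\]$. The super Jacobi identity decomposes into the relations: (a) $\rho$ is a Lie algebra homomorphism $\g_0\to\mathfrak{gl}(\g_1)$ (from the $(\g_0,\g_0,\g_1)$ component); (b) $\Gamma$ is $\g_0$-equivariant, i.e.\ $\rho(x)\cdot\Gamma(f,f') = \Gamma(\rho(x)f,f')+\Gamma(f,\rho(x)f')$ where $\g_0$ acts on itself by $\ad$ (from $(\g_0,\g_1,\g_1)$); and (c) the ``odd Jacobi'' relation $\rho(\Gamma(f,f'))f'' + \text{(cyclic)} = 0$ — equivalently $\Gamma(f,f')$ acts by a derivation-type constraint — coming from $(\g_1,\g_1,\g_1)$, which for dimension $2$ reads $\rho(\Gamma(f_1,f_1))f_1=0$ and two companion identities obtained by polarization. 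I would carry out, case by case, the reduction of $\rho$ under the residual symmetry $\GL(\g_1)$ (and, when $\g_0\cong\mathfrak{r}_2$, the much smaller stabilizer of the bracket inside $\GL(\g_0)$, which is the group of matrices $\mathrm{diag}(a,1)\cdot(\text{unipotent})$), then for each normal form of $\rho$ solve the linear constraints (b) on $\Gamma$, and finally impose (c) and quotient by the remaining symmetry. When $\g_0$ is abelian, condition (a) is vacuous and one classifies commuting pairs of $2\times2$ matrices up to simultaneous conjugacy together with a compatible $\Gamma$; the families $\LS_0$--$\LS_{12}$ arise here (with $\LS_6^\alpha$ the eigenvalue-ratio parameter of a single acting operator). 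When $\g_0\cong\mathfrak{r}_2$, (a) forces $\rho(e_1)$ to be nilpotent and $[\rho(e_1),\rho(e_2)]=\rho(e_1)$, a strong constraint that pins $\rho$ down to a few normal forms, giving $\LS_{13}$--$\LS_{19}$.

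For the ``furthermore'' part — the isomorphism criteria among the parametrized families — I would exhibit explicitly the allowed base changes. For $n\in\{6,15,16,18\}$ I show the parameter $\alpha$ is recovered as an honest invariant: e.g.\ for $\LS_6^\alpha$ it is the ratio of the two eigenvalues of $\ad e_2|_{\g_1}$ together with a normalization forced by which eigenvalue is ``$1$'', so no nontrivial relation occurs; here one also has to check that $\alpha$ and, say, $\alpha^{-1}$ are \emph{not} identified, which follows because the basis vector $e_2$ is canonically determined up to scale by the relation $[e_1,e_2]=e_1$ (or, in the abelian case, by which generator acts nontrivially), removing the swap symmetry. For $\LS_{14}^\alpha$ the symmetry $\alpha\leftrightarrow-\alpha-1$ comes from swapping $f_1\leftrightarrow f_2$ combined with the compensating change forced by the constraint $\[e_2,f_2\]=-(\alpha+1)f_2$ and $\[f_1,f_2\]=e_1$; I would write the explicit matrix realizing it and check it preserves all brackets. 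For $\LS_{13}^{\alpha,\beta}$ the unordered-pair identification is exactly the $f_1\leftrightarrow f_2$ swap, and one checks no further identifications by noting $\{\alpha,\beta\}$ is the multiset of eigenvalues of $\ad e_2|_{\g_1}$ with $e_2$ again normalized by $[e_1,e_2]=e_1$.

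\textbf{Main obstacle.} The genuinely laborious and error-prone step is the exhaustiveness of the case analysis: one must be certain that every normal form of the pair $(\rho,\Gamma)$ under the residual group has been reached and that no two normal forms on the final list are related by an overlooked base change. In particular, in the $\g_0\cong\mathfrak{r}_2$ branch the acting group on $\g_0$ is small, so $\rho(e_1),\rho(e_2)$ cannot be freely simplified, and one must track several sub-cases according to whether $\rho(e_1)=0$, $\rho(e_1)\ne0$ with $\rho(e_2)$ diagonalizable on $\ker\rho(e_1)$, etc.; coupling this with the nonlinear constraint (c) on $\Gamma$ is where the bookkeeping is heaviest. I would manage this by first classifying the pairs $(c,\rho)$ completely (a finite, checkable list), and only then, for each, running through the finitely many $\g_0$-equivariant $\Gamma$'s and imposing (c); the invariants from Lemma~\ref{lem:invariants} (dimensions of $\g^1$, of derivation spaces, traces of $\ad$) serve as an independent consistency check that the final nineteen items — and their parameter identifications — are pairwise non-isomorphic.
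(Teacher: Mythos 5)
The paper states Theorem~\ref{thm:alg_clas} without any proof, so there is no argument in the text to compare against; your plan --- splitting the data into the bracket on $\g_0$, the representation $\rho=\ad(\cdot)|_{\g_1}$, and the $\g_0$-equivariant symmetric map $\Gamma$, then normalizing under the residual symmetry and imposing the odd--odd--odd Jacobi constraint --- is the standard route and surely the one the authors followed. Your decomposition of the super Jacobi identity, the nilpotency of $\rho(e_1)$ when $\g_0\cong\mathfrak{r}_2$, and your description of the stabilizer of the bracket of $\mathfrak{r}_2$ are all correct.

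There is, however, a genuine gap in your treatment of the parameter identifications, and it occurs exactly at the point you flag. For $\mathcal{LS}_6^{\alpha}$ the even part is abelian, so $e_2$ is \emph{not} pinned down by a relation $\[e_1,e_2\]=e_1$; it is determined only modulo $\ker\rho=\C e_1$ and up to an arbitrary nonzero scalar. That scalar freedom rescales both eigenvalues of $\rho(e_2)$ simultaneously, and combined with the swap $f_1\leftrightarrow f_2$ it produces the identification $\alpha\mapsto\alpha^{-1}$: explicitly, $e_1'=e_1$, $e_2'=\alpha^{-1}e_2$, $f_1'=f_2$, $f_2'=f_1$ carries the brackets of $\mathcal{LS}_6^{\alpha}$ to those of $\mathcal{LS}_6^{1/\alpha}$ (all other brackets vanish on both sides). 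The true invariant is therefore the unordered eigenvalue ratio, i.e.\ the pair $\{\alpha,\alpha^{-1}\}$, not $\alpha$ itself; note this is consistent with the $(i,j)$-invariant recorded in Table~\ref{Table1}, which is symmetric under $\alpha\mapsto\alpha^{-1}$. Your proposed justification (``$e_2$ is canonically determined \dots by which generator acts nontrivially'') only fixes the line spanned by $e_2$ modulo the kernel, not its normalization, so it does not remove this symmetry. Carrying out your plan honestly would thus force you either to restrict the parameter range for $\mathcal{LS}_6^{\alpha}$ to a fundamental domain for $\alpha\mapsto\alpha^{-1}$, or to replace the ``furthermore'' clause for $n=6$ by ``$\alpha=\alpha'$ or $\alpha\alpha'=1$''. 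The analogous arguments for $\mathcal{LS}_{13}$, $\mathcal{LS}_{14}$, $\mathcal{LS}_{15}$, $\mathcal{LS}_{16}$ and $\mathcal{LS}_{18}$ are sound, because there $\[e_1,e_2\]=e_1$ genuinely fixes the normalization of $e_2$ (only translations by $\C e_1$ survive) or $\Gamma$ distinguishes $f_2$; the remaining risk in your plan is the bookkeeping of exhaustiveness that you already identify, not the method.
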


\subsection{Invariants in the variety ${\mathcal{LS}}^{(2,2)}$}
In order to obtain all possible degenerations in ${\mathcal{LS}}^{(2,2)}$,  we list some of the invariants for every $(2,2)$-dimensional Lie superalgebra. We list all Lie superalgebras from higher to lower orbit dimension. 



{\scriptsize
\begin{longtable}{|c|c|c|c|c|}
\caption[]{Invariants}
\label{Table1}\\
\hline
$\g$ & $\dim\mathcal{O}(\g)$ & $\rk\Gamma_\g$ &  $(\dim(\g^1)_0,\dim(\g^1)_1)$ & $c_{i,j}(\g)$ \\ 
\hline
\endfirsthead
\caption[]{(continued)}\\
\hline
$\g$ & $\dim\mathcal{O}(\g)$ & $\rk\Gamma_\g$ &  $(\dim(\g^1)_0,\dim(\g^1)_1)$ & $c_{i,j}(\g)$ \\ \hline
\endhead

{$\mathcal{LS}_1$} & {6} & {2} &  {(2,0)} & {$\not\exists$} \\ [1.5mm] \hline
{$\mathcal{LS}_5$} & {6} & {0} &  {(0,2)} & {$\not\exists$} \\ [1.5mm] \hline
$\mathcal{LS}_{19}$ & 6 & 2 &  (2,1) & $c_{2i,2j}=2$   \\ [1.5mm] \hline
{$\mathcal{LS}_4$} & {5} & {2} &  {(2,0)} & {$\not\exists$}\\ [1.5mm] \hline
{$\mathcal{LS}_7$} & {5} & {1} &  {(1,2)} & {$c_{2i,2j}=2$} \\ [1.5mm] \hline
{$\mathcal{LS}_8$} & {5} & {1} &  {(1,1)} & {$c_{i,j}=1$} \\ [1.5mm] \hline
{$\mathcal{LS}_9$} & {5} & {0} &  {(0,2)} & {$c_{i,j}=2$} \\ [1.5mm] \hline
$\mathcal{LS}_{14}^{\alpha}$ & 5 & 1 & $(1,2-\delta_{0,\alpha}-\delta_{1,\alpha})$ & $c_{i,j}=\frac{\left((-1)^i+\alpha^i+\left(-\alpha-1\right)^i\right)\left((-1)^j+\alpha^j+\left(-\alpha-1\right)^j\right)}{(-1)^{i+j}+\alpha^{i+j}+\left(-\alpha-1\right)^{i+j}}$  \\ [1.5mm] \hline
$\mathcal{LS}_{15}^{\alpha\neq-\frac{1}{2}}$ & 5 & 1 &  $(1,2-\delta_{0,\alpha})$ & $c_{i,j}=\frac{\left((-1)^i+\alpha^i+\left(-\frac{1}{2}\right)^i\right)\left((-1)^j+\alpha^j+\left(-\frac{1}{2}\right)^j\right)}{(-1)^{i+j}+\alpha^{i+j}+\left(-\frac{1}{2}\right)^{i+j}}$ \\ [1.5mm] \hline
$\mathcal{LS}_{17}$ & 5 & 1 &  (1,2) & $c_{i,j}=\frac{\left((-1)^i+2\left(-\frac{1}{2}\right)^i\right)\left((-1)^j+2\left(-\frac{1}{2}\right)^j\right)}{(-1)^{i+j}+2\left(-\frac{1}{2}\right)^{i+j}}$   \\ [1.5mm] \hline
$\mathcal{LS}_{18}^{\alpha}$ & 5 & 0 &  (1,$2-\delta_{\alpha,-1}$) & $c_{i,j}=\frac{\left((-1)^i+\alpha^i+(1+\alpha)^i\right)\left((-1)^j+\alpha^j+(1+\alpha)^j\right)}{(-1)^{i+j}+\alpha^{i+j}+(1+\alpha)^{i+j}}$  \\ [1.5mm] \hline
{$\mathcal{LS}_2$} & {4} & {1} &  {(1,0)} & {$\not\exists$} \\ [1.5mm] \hline
{$\mathcal{LS}_6^{\alpha\neq 1}$} & {4} & {0} & {$(0,2-\delta_{0,\alpha})$} & {$c_{(1+\delta_{-1,\alpha})i,(1+\delta_{-1,\alpha})j}=\frac{\left(1+\alpha^i\right)\left(1+\alpha^j\right)}{1+\alpha^{i+j}}$} \\ [1.5mm] \hline
{$\mathcal{LS}_{10}$} & {4} & {0} &  {(0,2)} & {$c_{i,j}=2$} \\ [1.5mm] \hline
{$\mathcal{LS}_{12}$} & {4} & {1} &  {(1,1)} & {$\not\exists$} \\ [1.5mm] \hline
$\mathcal{LS}_{13}^{\alpha\neq\beta}$ & 4 & 0 & $(1,2-\delta_{0,\alpha}-\delta_{0,\beta})$ & $c_{i,j}=\frac{((-1)^i+\alpha^i+\beta^i)((-1)^j+\alpha^j+\beta^j)}{(-1)^{i+j}+\alpha^{i+j}+\beta^{i+j}}$  
\\ [1.5mm] \hline
$\mathcal {LS}_{15}^{-\frac{1}{2}}$ & 4 & 1 &  (1,2) & $c_{i,j}=\frac{\left((-1)^i+2\left(-\frac{1}{2}\right)^i\right)\left((-1)^j+2\left(-\frac{1}{2}\right)^j\right)}{(-1)^{i+j}+2\left(-\frac{1}{2}\right)^{i+j}}$ \\ [1.5mm] \hline
$\mathcal{LS}_{16}^{\alpha}$ & 4 & 0 &  $(1,2-\delta_{0,\alpha})$& $c_{i,j}=\frac{((-1)^i+2\alpha^i)((-1)^j+2\alpha^j)}{(-1)^{i+j}+2\alpha^{i+j}}$  \\ [1.5mm] \hline
{$\mathcal{LS}_3$} & {3} & {1} &  {(1,0)} & {$\not\exists$} \\ [1.5mm] \hline
{$\mathcal{LS}_{11}$} & {3} & {0} &  {(0,1)} & {$\not\exists$} \\ [1.5mm] \hline
{$\mathcal{LS}_6^1$} & {2} & {0} &  {(0,2)} & {$c_{i,j}=2$} \\ [1.5mm] \hline
$\mathcal{LS}_{13}^{\alpha,\alpha}$ & 2 & 0 &  $(1,2-2\delta_{0,\alpha})$ & $c_{i,j}=\frac{((-1)^i+2\alpha^i)((-1)^j+2\alpha^j)}{(-1)^{i+j}+2\alpha^{i+j}}$ \\ [1.5mm] \hline
{$\mathcal{LS}_0$} & {0} & {0} & {(0,0)} & {$\not\exists$}\\ [1.5mm] \hline

\end{longtable}}



\normalsize

\subsubsection{Non-degenerations}

Most of the non-degeneration arguments can be seen from Table \ref{Table1} and by using Lemmas \ref{lem:deg_alg}, \ref{lem:invariants} and \ref{lemma:ij-inv}
. We list here the non-degenerations obtained by comparing $(\alpha,\beta,\gamma)$-derivations or by using Lemma \ref{lem:abel}. In every case, $\g$ represents the Lie superalgebra on the left of the arrow, and $\h$ represents the Lie superalgebra on the right of the arrow.

{\scriptsize
\begin{longtable}{|l|l|}
\caption[]{Non-degenerations}
\label{Table2}\\
\hline
Non-degeneration $\g\to\h$ & Reason \\ 
\hline
\endfirsthead

$\mathcal{LS}_{13}^{1,\frac{1}{\gamma}}\not\rightarrow \mathcal{LS}_{13}^{\gamma,\gamma}$, $\gamma\neq -1$ & $\dim\mathfrak{D}\left(1,1,-1\right)_1(\g)=1>0=\dim\mathfrak{D}\left(1,1,-1\right)_1(\h)$ \\

$\mathcal{LS}_{13}^{\alpha,\beta}\not\rightarrow \mathcal{LS}_{6}^1$ & $\dim\mathfrak{D}\left(\frac{1}{\alpha},1,-1\right)_1(\g)=1>0=\dim\mathfrak{D}\left(\frac{1}{\alpha},1,-1\right)_1(\h)$\\ 

$\mathcal{LS}_{13}^{1,2}\not\rightarrow \mathcal{LS}_{13}^{\frac{1}{2},\frac{1}{2}}$ & $\dim\mathfrak{D}\left(1,1,-1\right)_1(\g)=1>0=\dim\mathfrak{D}\left(1,1,-1\right)_1(\h)$\\

$\mathcal{LS}_{13}^{1,-2}\not\rightarrow \mathcal{LS}_{13}^{-\frac{1}{2},-\frac{1}{2}}$ & $\dim\mathfrak{D}\left(1,1,-1\right)_1(\g)=1>0=\dim\mathfrak{D}\left(1,1,-1\right)_1(\h)$\\

$\mathcal{LS}_{13}^{1,-\frac{1}{2}}\not\rightarrow \mathcal{LS}_{13}^{-2,-2}$ & $\dim\mathfrak{D}\left(1,1,-1\right)_1(\g)=1>0=\dim\mathfrak{D}\left(1,1,-1\right)_1(\h)$\\

$\mathcal{LS}_{13}^{\alpha,-\frac{1}{2}}\not\rightarrow \mathcal{LS}_{6}^1$ & $\dim\mathfrak{D}\left(-2,1,-1\right)_1(\g)=1>0=\dim\mathfrak{D}\left(-2,1,-1\right)_1(\h)$\\ \hline


$\mathcal{LS}_9\not\rightarrow \mathcal{LS}_6^{-1}$ & $\dim\mathfrak{D}(1,1,0)_1(\g)=1>0=\dim\mathfrak{D}(1,1,0)_1(\h)$\\ \hline


$\mathcal{LS}_{14}^{\alpha}\not\rightarrow \mathcal{LS}_{13}^{\beta,\beta+1},\ \mathcal{LS}_{13}^{\beta,-\frac{1}{2}},\ \mathcal{LS}_6^1$ & $\mathcal{F}(\g)\simeq\mathcal{LS}_{13}^{\alpha,-(\alpha+1)}\not\rightarrow \h=\mathcal{F}(\h)$\\

$\mathcal{LS}_{14}^1\not\rightarrow \mathcal{LS}_{16}^{-\frac{1}{2}},\mathcal {LS}_{15}^{-\frac{1}{2}}$ & $\dim\mathfrak{D}\left(-\frac{1}{2},1,-1\right)_1(\g)=1>0=\dim\mathfrak{D}\left(-\frac{1}{2},1,-1\right)_1(\h)$\\

$\mathcal{LS}_{14}^0\not\rightarrow \mathcal{LS}_6^{-1}$ & $\dim\mathfrak{D}\left(0,1,-1\right)_1(\g)=8>2=\dim\mathfrak{D}\left(0,1,-1\right)_1(\h)$\\

$\mathcal {LS}_{14}^{-\frac{1}{2}}\not\rightarrow \mathcal{LS}_{13}^{\alpha,\alpha+1}$ & $\mathcal{F}(\g)\simeq\mathcal{LS}_{13}^{-\frac{1}{2},-\frac{1}{2}}\not\rightarrow \h=\mathcal{F}(\h)$\\

$\mathcal {LS}_{14}^{-\frac{1}{2}}\not\rightarrow \mathcal{LS}_{13}^{\alpha,-(\alpha+1)}$ & $\mathcal{F}\left(\g\right)\simeq\mathcal{LS}_{13}^{-\frac{1}{2},-\frac{1}{2}}\not\rightarrow \h=\mathcal{F}(\h)$\\

$\mathcal {LS}_{14}^{-\frac{1}{2}}\not\rightarrow \mathcal{LS}_{16}^{-\frac{1}{2}}$ & $\dim\mathfrak{D}\left(-2,1,-1\right)_1(\g)=2>\dim\mathfrak{D}\left(-2,1,-1\right)_1(\h)$\\

$\mathcal {LS}_{14}^{-\frac{1}{2}}\not\rightarrow \mathcal{LS}_{11},\mathcal{LS}_{12}$ & $\mathcal{F}(\g)\simeq \mathcal{LS}_{13}^{-\frac{1}{2},-\frac{1}{2}}\not\rightarrow \mathcal{LS}_{11}=\mathcal{F}(\h)$\\ \hline


$\mathcal {LS}_{15}^{-\frac{1}{2}}\not\rightarrow \mathcal{LS}_{11}$ & $\mathcal{F}(\g)\simeq \mathcal{LS}_{13}^{-\frac{1}{2},-\frac{1}{2}}\not\rightarrow \mathcal{LS}_{11}=\mathcal{F}(\mathcal{LS}_{11})$\\ 

$\mathcal{LS}_{15}^{\alpha}\not\rightarrow \mathcal{LS}_{2}$ & $\ab(\g)\simeq\mathcal{LS}_{3}\not\rightarrow \h=\ab(\h)$\\

$\mathcal{LS}_{15}^{\alpha}\not\rightarrow \mathcal{LS}_6^{\beta},\mathcal{LS}_{10},\ \mathcal{LS}_{13}^{\beta,\gamma},\ \mathcal{LS}_{16}^{\beta}$ & $\mathcal{F}(\g)\simeq\mathcal{LS}_{13}^{\alpha,-\frac{1}{2}}\not\rightarrow\h=\mathcal{F}(\h)$\\

$\mathcal{LS}_{15}^{\alpha}\not\rightarrow \mathcal {LS}_{15}^{-\frac{1}{2}}$ & $\mathcal{F}(\g)\simeq\mathcal{LS}_{13}^{\alpha,-\frac{1}{2}}\not\rightarrow \mathcal{LS}_{13}^{-\frac{1}{2},-\frac{1}{2}}\simeq\mathcal{F}(\h)$\\ \hline


$\mathcal{LS}_{17}\not\rightarrow \mathcal{LS}_{13}^{\alpha,\alpha+1},\ \mathcal{LS}_{13}^{\alpha,-(\alpha+1)}$ & $\mathcal{F}(\g)\simeq\mathcal{LS}_{16}^{-\frac{1}{2}}\not\rightarrow \h=\mathcal{F}(\h)$\\ \hline


$\mathcal{LS}_{18}^{-\frac{2}{3}}\not\rightarrow \mathcal{LS}_{13}^{-\frac{3}{2},-\frac{1}{2}}$ & $\dim\mathfrak{D}\left(\frac{2}{3},1,-1\right)_1(\g)=1>0=\dim\mathfrak{D}\left(\frac{2}{3},1,-1\right)_1(\h)$\\

$\mathcal{LS}_{18}^{-2}\not\rightarrow \mathcal{LS}_{13}^{\frac{1}{2},-\frac{1}{2}}$ & $\dim\mathfrak{D}\left(-1,1,-1\right)_1(\g)=1>0=\dim\mathfrak{D}\left(-1,1,-1\right)_1(\h)$\\

$\mathcal{LS}_{18}^{-\frac{1}{3}}\not\rightarrow \mathcal{LS}_{13}^{\frac{3}{2},-\frac{1}{2}}$ & $\dim\mathfrak{D}\left(\frac{1}{3},1,-1\right)_1(\g)=1>0=\dim\mathfrak{D}\left(\frac{1}{3},1,-1\right)_1(\h)$\\

$\mathcal{LS}_{18}^{-3}\not\rightarrow \mathcal{LS}_{13}^{\frac{3}{2},-\frac{1}{2}}$ & $\dim\mathfrak{D}\left(3,1,-1\right)_1(\g)=1>0=\dim\mathfrak{D}\left(3,1,-1\right)_1(\h)$\\

$\mathcal{LS}_{18}^1\not\rightarrow \mathcal{LS}_{16}^{\frac{1}{2}},\mathcal{LS}_{13}^{\frac{1}{2},\frac{1}{2}}$ & $\dim\mathfrak{D}\left(\frac{1}{2},1,-1\right)_1(\g)=1>0=\dim\mathfrak{D}\left(\frac{1}{2},1,-1\right)_1(\h)$\\

$\mathcal{LS}_{18}^{\frac{\gamma}{1-\gamma}}\not\rightarrow \mathcal{LS}_{13}^{\gamma,1-\gamma}$, $\gamma\neq\frac{1\pm\sqrt{-3}}{2},0,2$ & $\dim\mathfrak{D}\left(1-\gamma,1,-1\right)_1(\g)=1>0=\dim\mathfrak{D}\left(1-\gamma,1,-1\right)_1(\h)$\\  

$\mathcal{LS}_{18}^{\frac{\gamma}{1-\gamma}}\not\rightarrow \mathcal{LS}_{13}^{\gamma,1-\gamma}$, $\gamma\in\left\{\frac{1\pm\sqrt{-3}}{2},2\right\}$ & $\dim\mathfrak{D}\left(\frac{\gamma}{\gamma-1},1,0\right)_1(\g)=1>0=\dim\mathfrak{D}\left(\frac{\gamma}{\gamma-1},1,0\right)_1(\h)$\\

$\mathcal{LS}_{18}^{\frac{1-\gamma}{\gamma}}\not\rightarrow \mathcal{LS}_{13}^{1-\gamma,\gamma}$,\ $\gamma\neq\frac{1\pm\sqrt{-3}}{2},\pm 1$ & $\dim\mathfrak{D}\left(\gamma,1,-1\right)_1(\g)=1>0=\dim\mathfrak{D}\left(\gamma,1,-1\right)_1(\h)$\\

$\mathcal{LS}_{18}^{\frac{1-\gamma}{\gamma}}\not\rightarrow \mathcal{LS}_{13}^{1-\gamma,\gamma}$,\ $\gamma\in\left\{\frac{1\pm\sqrt{-3}}{2},-1\right\}$ & $\dim\mathfrak{D}\left(\frac{\gamma-1}{\gamma},1,0\right)_1(\g)=1>0=\dim\mathfrak{D}\left(\frac{\gamma-1}{\gamma},1,0\right)_1(\h)$\\

$\mathcal{LS}_{18}^0\not\rightarrow \mathcal{LS}_6^1,\ \mathcal{LS}_{10}$ & $\dim\mathfrak{D}\left(0,1,-1\right)_1(\g)=8>2=\dim\mathfrak{D}\left(0,1,-1\right)_1(\h)$\\

$\mathcal{LS}_{18}^{\alpha}\not\rightarrow \mathcal{LS}_6^{\beta},\ \beta\neq 0,1$ & $\dim\mathfrak{D}\left(1,1,-1\right)_1(\g)=2>0=\dim\mathfrak{D}\left(1,1,-1\right)_1(\h)$\\ \hline


\end{longtable}}

\subsection{Degenerations}

Next we summarize the essential degenerations $\g\rightarrow\h$ in the variety $\mathcal{LS}^{(2,2)}$. In every case, we provide a parametric change of basis $\{x_1^t,x_2^t,y_1^t,y_2^t\}$ of $\g$ such that for $t=0$ we obtain the Lie products of $\h$. For example, consider the first degeneration in Table \ref{Table3}: $\mathcal{LS}_{19}\to\mathcal{LS}_4$. The parameterized base products of $\mathcal{LS}_{19}$ are:
$$\[x_1^t,x_2^t\]=2tx_1^t,\quad\[x_1^t,y_2^t\]=ty_1^t,\quad\[x_2^t,y_1^t\]=-2ty_1^t,\quad\[y_1^t,y_2^t\]=x_1^t,\quad\[y_2^t,y_2^t\]=x_2^t.$$
When $t=0$ we obtain the products of $\mathcal{LS}_4$.

{\scriptsize
\begin{longtable}{|l|llll|}
\caption[]{Degenerations}
\label{Table3}\\
\hline
$\g\to\h$ & \multicolumn{4}{|c|}{Parametrized bases} \\ 
\hline
\endfirsthead
\caption[]{(continued)}\\
\hline
$\g\to\h$ & \multicolumn{4}{|c|}{Parameterized bases} \\ \hline
\endhead
$\mathcal{LS}_{19}\rightarrow \mathcal{LS}_4$ & $x_1^t=te_1$, & $x_2^t=2te_2$, &  $y_1^t=\sqrt{t}f_1$, &  $y_2^t=\sqrt{t}f_2$\\ \hline   

$\mathcal{LS}_{19}\rightarrow \mathcal{LS}_{12}$ & $x_1^t=te_2$, & $x_2^t=te_1$, &  $y_1^t=\sqrt{\frac{t}{2}}f_1$, &  $y_2^t=\sqrt{\frac{t}{2}}f_2$\\ \hline

$\mathcal{LS}_{19}\rightarrow \mathcal{LS}_{18}^{-1}$ &  $x_1^t=e_1$, & $x_2^t=e_2$, &  $y_1^t=tf_1$, &  $y_2^t=tf_2$\\ \hline 

$\mathcal{LS}_{19}\rightarrow \mathcal{LS}_{14}^0$ & $x_1^t=te_1$, & $x_2^t=e_2$, &  $y_1^t=f_2$, &  $y_2^t=tf_1$\\ \hline   

$\mathcal{LS}_1\rightarrow \mathcal{LS}_4$ & $x_1^t=\sqrt{t}\left(-e_1+e_2\right)$, & $x_2^t=e_1+e_2$, &  $y_1^t=\sqrt{t}\left(-f_1+f_2\right)$, &  $y_2^t=f_1+f_2$\\ \hline   

$\mathcal{LS}_5\rightarrow \mathcal{LS}_{9}$ & $x_1^t=e_1+e_2$, & $x_2^t=te_1$, &  $y_1^t=tf_1$, &  $y_2^t=f_1+f_2$\\ \hline  
 
$\mathcal{LS}_5\rightarrow \mathcal{LS}_6^{\alpha}$ & $x_1^t=te_1$, & $x_2^t=e_1+\alpha e_2$, &  $y_1^t=f_1$, &  $y_2^t=f_2$\\ \hline 
  

$\mathcal{LS}_4\rightarrow \mathcal{LS}_2$ & $x_1^t=e_1$, & $x_2^t=t(t-1)e_1+te_2$, &  $y_1^t=f_1$, &  $y_2^t=\frac{1}{2t}f_1+tf_2$\\ \hline   

$\mathcal{LS}_7\rightarrow \mathcal{LS}_2$ & $x_1^t=e_1$, & $x_2^t=te_2$, &  $y_1^t=-\frac{i}{2}f_1+if_2$, &  $y_2^t=\frac{1}{2}f_1+f_2$\\ \hline   

$\mathcal{LS}_7\rightarrow \mathcal{LS}_6^{-1}$ & $x_1^t=^\frac{1}{t}e_1$, & $x_2^t=e_2$, &  $y_1^t=f_1$, &  $y_2^t=f_2$\\ \hline   

$\mathcal{LS}_{7}\rightarrow \mathcal{LS}_{12}$ & $x_1^t=e_1$, & $x_2^t=te_2$, &  $y_1^t=f_1$, &  $y_2^t=\frac{1}{2t}f_1+tf_2$\\ \hline  

$\mathcal{LS}_{8}\rightarrow \mathcal{LS}_2$ & $x_1^t=e_1$, & $x_2^t=te_2$, &  $y_1^t=f_1+f_2$, &  $y_2^t=f_2$\\ \hline  

$\mathcal{LS}_{8}\rightarrow \mathcal{LS}_6^0$ & $x_1^t=e_1$, & $x_2^t=e_2$, &  $y_1^t=f_1$, &  $y_2^t=tf_2$\\ \hline

$\mathcal{LS}_{8}\rightarrow \mathcal{LS}_{12}$ & $x_1^t=e_1$, & $x_2^t=te_2$, &  $y_1^t=f_1$, &  $y_2^t=\frac{1}{t}f_1+f_2$\\ \hline  

$\mathcal{LS}_{9}\rightarrow \mathcal{LS}_{10}$ & $x_1^t=te_1$, & $x_2^t=e_1+e_2$, &  $y_1^t=f_1$, &  $y_2^t=f_2$\\ \hline  

$\mathcal{LS}_{14}^{\alpha}\rightarrow \mathcal{LS}_{13}^{\alpha,-(\alpha+1)}$ & $x_1^t=\frac{1}{t}e_1$, & $x_2^t=e_2$, &  $y_1^t=f_1$, &  $y_2^t=f_2$\\ \hline  

$\mathcal{LS}_{14}^{\alpha}\rightarrow \mathcal{LS}_2$ & $x_1^t=e_1$, & $x_2^t=te_2$, &  $y_1^t=-if_1+\frac{i}{2}f_2$, &  $y_2^t=f_1+\frac{1}{2}f_2$\\ \hline  

$\mathcal{LS}_{15}^{\alpha\neq-\frac{1}{2}}\rightarrow \mathcal{LS}_{12}$ & $x_1^t=e_1$, & $x_2^t=te_2$, &  $y_1^t=f_1$, &  $y_2^t=\frac{2}{t(2\alpha+1)}f_1+f_2$\\ \hline  

$\mathcal{LS}_{15}^{\alpha}\rightarrow \mathcal{LS}_{13}^{\alpha,-\frac{1}{2}}$ & $x_1^t=\frac{1}{t}e_1$, & $x_2^t=e_2$, &  $y_1^t=f_1$, &  $y_2^t=f_2$\\ \hline  


$\mathcal {LS}_{14}^{-\frac{1}{2}}\rightarrow \mathcal {LS}_{15}^{-\frac{1}{2}}$ & $x_1^t=e_1$, & $x_2^t=e_2$, &  $y_1^t=f_1$, &  $y_2^t=tf_2$\\ \hline  

$\mathcal{LS}_{17}\rightarrow \mathcal{LS}_{10}$ & $x_1^t=e_1$, & $x_2^t=te_2$, &  $y_1^t=f_1+f_2$, &  $y_2^t=f_2$\\ \hline  

$\mathcal{LS}_{17}\rightarrow \mathcal{LS}_{12}$ & $x_1^t=t^2e_1$, & $x_2^t=te_2$, &  $y_1^t=t^2f_1$, &  $y_2^t=tf_2$\\ \hline  


$\mathcal{LS}_{17}\rightarrow \mathcal{LS}_{16}^{-\frac{1}{2}}$ &$x_1^t=\frac{1}{t}e_1$, & $x_2^t=e_2$, &  $y_1^t=f_1$, &  $y_2^t=f_2$\\ \hline  

$\mathcal{LS}_{17}\rightarrow \mathcal {LS}_{15}^{-\frac{1}{2}}$ & $x_1^t=e_1$, & $x_2^t=e_2$, &  $y_1^t=f_2$, &  $y_2^t=\frac{1}{t}f_1$\\ \hline  

$\mathcal{LS}_{18}^{\alpha}\rightarrow \mathcal{LS}_{13}^{\alpha,\alpha+1}$ & $x_1^t=e_1$, & $x_2^t=e_2$, &  $y_1^t=\frac{1}{t}f_1$, &  $y_2^t=f_2$\\ \hline  


$\mathcal{LS}_2\rightarrow \mathcal{LS}_3$ & $x_1^t=e_1$, & $x_2^t=e_2$, &  $y_1^t=f_1$, &  $y_2^t=tf_2$\\ \hline  

$\mathcal{LS}_{6}^{\alpha\neq 1}\rightarrow \mathcal{LS}_{11}$ & $x_1^t=\frac{1}{t}e_1$, & $x_2^t=-te_2$, &  $y_1^t=tf_2$, &  $y_2^t=f_1-\frac{1}{\alpha-1}f_2$\\ \hline  

$\mathcal{LS}_{10}\rightarrow \mathcal{LS}_{11}$ & $x_1^t=e_1$, & $x_2^t=te_2$, &  $y_1^t=f_1$, &  $y_2^t=\frac{1}{t}f_2$\\ \hline  

$\mathcal{LS}_{10}\rightarrow \mathcal{LS}_{6}^1$ & $x_1^t=e_1$, & $x_2^t=e_2$, &  $y_1^t=\frac{1}{t}f_1$, &  $y_2^t=f_2$\\ \hline  

$\mathcal{LS}_{12}\rightarrow \mathcal{LS}_{11}$ & $x_1^t=te_1$, & $x_2^t=e_2$, &  $y_1^t=f_1$, &  $y_2^t=f_2$\\ \hline  

$\mathcal{LS}_{12}\rightarrow \mathcal{LS}_{3}$ & $x_1^t=e_1$, & $x_2^t=te_2$, &  $y_1^t=f_2$, &  $y_2^t=f_1$\\ \hline 
 
$\mathcal{LS}_{13}^{\alpha\neq\beta}\rightarrow \mathcal{LS}_{11}$ & $x_1^t=\frac{1}{t}e_1$, & $x_2^t=-te_2$, &  $y_1^t=tf_2$, &  $y_2^t=f_1+\frac{1}{\alpha-\beta}f_2$\\ \hline 
 



$\mathcal{LS}_{16}^{\alpha}\rightarrow \mathcal{LS}_{11}$ & $x_1^t=e_1$, & $x_2^t=te_2$, &  $y_1^t=f_1$, &  $y_2^t=\frac{1}{t}f_2$\\ \hline  

$\mathcal{LS}_{16}^{\alpha}\rightarrow \mathcal{LS}_{13}\left(\alpha,\alpha\right)$ & $x_1^t=e_1$, & $x_2^t=e_2$, &  $y_1^t=\frac{1}{t}f_1$, &  $y_2^t=f_2$\\ \hline  

 
$\mathcal {LS}_{15}^{-\frac{1}{2}}\rightarrow \mathcal{LS}_3$ & $x_1^t=e_1$, & $x_2^t=te_2$, &  $y_1^t=f_1$, &  $y_2^t=f_2$\\ \hline  

$\mathcal {LS}_{15}^{-\frac{1}{2}}\rightarrow \mathcal{LS}_{13}^{-\frac{1}{2},-\frac{1}{2}}$ & $x_1^t=\frac{1}{t}e_1$, & $x_2^t=e_2$, &  $y_1^t=f_1$, &  $y_2^t=f_2$\\ \hline  



\end{longtable}}

\normalsize

With all this, we can draw the Hasse diagram of essential degenerations for the variety $\mathcal{LS}^{(2,2)}$:

\begin{landscape}
{\begin{center}

\begin{tikzpicture}[->,>=stealth',shorten >=0.01cm,auto,node distance=1.5cm,
                    thick,main node/.style={rectangle,draw,fill=white!12,rounded corners=1.5ex,font=\sffamily \bf \bfseries },
                    blue node/.style={rectangle,draw, color=blue,fill=white!12,rounded corners=1.5ex,font=\sffamily \bf \bfseries },
                    thick,rigid node/.style={rectangle,draw,fill=black!12,rounded corners=1.5ex,font=\sffamily \bf \bfseries },
                    thick,bluerigid node/.style={rectangle,draw,color=blue,fill=black!12,rounded corners=1.5ex,font=\sffamily \bf \bfseries },
                    style={draw,font=\sffamily \scriptsize \bfseries }]
                    
\node (71)   {};

\node (61) [below          of=71]      {};
\node (51) [below         of=61]      {};
\node (41) [below         of=51]      {};
\node (31) [below         of=41]      {};
\node (21) [below         of=31]      {};
\node (11) [below         of=21]      {};
\node (01) [below         of=11]      {};

\node (62)  [right of =61]                      { };
\node (63)  [right of =62]                      { };
\node  (64)  [right of =63]                      { };
\node (65)  [right of =64]                      {};
\node (66)  [right of =65]                      { };
\node  (67)  [right of =66]                      { };
\node  (68)  [right of =67]                      { };
\node (69)  [right of =68]                      { };
\node (610)  [right of =69]                      { };
\node (611)  [right of =610]                      { };
\node  (612)  [right of =611]                      { };

\node [rigid node] (19)  [right of =62]                      {$\mathcal{LS}_{19}$};
\node [bluerigid node] (1)  [right of =69]                      {$\mathcal{LS}_1$ };
\node [rigid node] (5)  [right of =612]                      {$\mathcal{LS}_5$};

\node (52)  [right of =51]                      { };
\node (53)  [right of =52]                      { };
\node  (54)  [right of =53]                      { };
\node (55)  [right of =54]                      {};
\node (56)  [right of =55]                      { };
\node  (57)  [right of =56]                      { };
\node  (58)  [right of =57]                      { };
\node (59)  [right of =58]                      { };
\node (510)  [right of =59]                      { };
\node (511)  [right of =510]                      { };
\node  (512)  [right of =511]                      { };
\node (513)  [right of =512]                      { };
\node  (514)  [right of =513]                      { };

\node [main node] (18a)  [left of =51]                      {$\mathcal{LS}_{18}^{\alpha}$};
\node [main node] (17)  [right of =51]                      {$\mathcal{LS}_{17}$};
\node [main node] (14a)  [right of =53]                      {$\mathcal{LS}_{14}^{\alpha}$};
\node [main node] (15a)  [right of =55]                      {$\mathcal{LS}_{15}^{\alpha\neq-\frac{1}{2}}$};
\node [main node] (7)  [right of =57]                      {$\mathcal{LS}_7$};
\node [main node] (8)  [right of =59]                      {$\mathcal{LS}_8$};
\node [blue node] (4)  [right of =511]                      {$\mathcal{LS}_4$};
\node [main node] (9)  [right of =513]                      {$\mathcal{LS}_9$};


\node (42)  [right of =41]                      { };
\node (43)  [right of =42]                      { };
\node  (44)  [right of =43]                      { };
\node (45)  [right of =44]                      {};
\node (46)  [right of =45]                      { };
\node  (47)  [right of =46]                      { };
\node  (48)  [right of =47]                      { };
\node (49)  [right of =48]                      { };
\node (410)  [right of =49]                      { };
\node (411)  [right of =410]                      { };
\node  (412)  [right of =411]                      { };
\node  (413)  [right of =412]                      { };

\node [main node] (1512)  [left of =41]                       {$\mathcal{LS}_{15}^{-\frac{1}{2}}$};
\node [main node] (16a)  [right of =41]                      {$\mathcal{LS}_{16}^{\alpha}$};
\node [main node] (10)  [right of =43]                      {$\mathcal{LS}_{10}$};
\node [main node] (12)  [right of =45]                      {$\mathcal{LS}_{12}$};
\node [main node] (13ab)  [right of =47]                     {$\mathcal{LS}_{13}^{\alpha\neq\beta}$};
\node [blue node] (2)  [right of =49]                     {$\mathcal{LS}_2$};
\node [main node] (6a)  [right of =411]                     {$\mathcal{LS}_6^{\alpha\neq 1}$};

\node (32)  [right of =31]                      { };
\node (33)  [right of =32]                      { };
\node  (34)  [right of =33]                      { };
\node (35)  [right of =34]                      { };
\node (36)  [right of =35]                      { };
\node  (37)  [right of =36]                      { };
\node  (38)  [right of =37]                      { };
\node (39)  [right of =38]                      { };
\node (310)  [right of =39]                      { };
\node (311)  [right of =310]                      { };
\node  (312)  [right of =311]                      { };

\node [blue node] (3)  [right of =35]                      {$\mathcal{LS}_3$};
\node [blue node] (11)  [right of =37]                       {$\mathcal{LS}_{11}$};

\node (22)  [right of =21]                      { };
\node (23)  [right of =22]                      { };
\node  (24)  [right of =23]                      { };
\node (25)  [right of =24]                      {};
\node (26)  [right of =25]                      { };
\node  (27)  [right of =26]                      { };
\node  (28)  [right of =27]                      { };
\node (29)  [right of =28]                      { };
\node (210)  [right of =29]                      { };
\node (211)  [right of =210]                      { };
\node  (212)  [right of =211]                      { };

\node [main node] (61)  [right of =24]                      {$\mathcal{LS}_6^1$};
\node [main node] (13aa)  [right of =21]                       {$\mathcal{LS}_{13}^{\alpha,\alpha}$};

\node (02)  [right of =01]                      { };
\node (03)  [right of =02]                      { };
\node  (04)  [right of =03]                      { };
\node (05)  [right of =04]                      {};
\node (06)  [right of =05]                      { };
\node  [blue node] (0)  [right of =06]                      {$\mathcal{LS}_0$ };

\path[every node/.style={font=\sffamily\tiny}]

(19)  edge [bend right=0, color=black] node[above=-5, right=-30,  fill=white]{$\alpha=-1$}  (18a)
(19)  edge [bend right=30, color=black] node[above=0, right=-10,  fill=white]{$\alpha=0$}  (14a)
(19)  edge [bend right=-40, color=black] node{}  (4)
(19)  edge [bend right=-20, color=black] node{}  (12)

(1)  edge [bend right=0, color=blue] node{}  (4)

(5)  edge [bend right=0, color=black] node{}  (9)
(5)  edge [bend right=-10, color=black] node{}  (6a)

(18a)  edge [bend right=-45, color=black] node[above=0, right=-10,  fill=white]{$\beta=\alpha+1$}  (13ab)

(17)  edge [bend right=0, color=black] node{}  (10)
(17)  edge [bend right=0, color=black] node{}  (12)
(17)  edge [bend right=0, color=black] node[above=2, right=-15,  fill=white]{$\alpha=-\frac{1}{2}$}  (16a)
(17)  edge [bend right=0, color=black] node{}  (1512)

(14a)  edge [bend right=35, color=black] node{}  (2)
(14a)  edge [bend right=-50, color=black] node[above=2, right=-70,  fill=white]{$\beta=-(\alpha+1)$}  (13ab)
(14a)  edge [bend right=10, color=black] node[above=-20, right=-30,  fill=white]{$\alpha=-\frac{1}{2}$}  (13aa)

(4)  edge [bend right=0, color=blue] node{}  (2)

(7)  edge [bend right=0, color=black] node{}  (2)
(7)  edge [bend right=-40, color=black] node[above=0, right=-70,  fill=white]{$\alpha=-1$}  (6a)
(7)  edge [bend right=-10, color=black] node{}  (12)

(8)  edge [bend right=0, color=black] node{}  (2)
(8)  edge [bend right=-10, color=black] node[above=7, right=-25,  fill=white]{$\alpha=0$}  (6a)
(8)  edge [bend right=0, color=black] node{}  (12)

(9)  edge [bend right=40, color=black] node{}  (10)

(15a)  edge [bend right=0, color=black] node{}  (12)
(15a)  edge [bend right=25, color=black] node[above=12, right=-30,  fill=white]{$\beta=-\frac{1}{2}$}  (13ab)

(10)  edge [bend right=0, color=black] node{}  (11)
(10)  edge [bend right=0, color=black] node{}  (61)

(12)  edge [bend right=0, color=black] node{}  (11)
(12)  edge [bend right=0, color=black] node{}  (3)

(13ab)  edge [bend right=0, color=black] node{}  (11)

(2)  edge [bend right=0, color=blue] node{}  (3)

(6a)  edge [bend right=0, color=black] node{}  (11)

(61)  edge [bend right=0, color=black] node{}  (0)

(1512)  edge [bend right=0, color=black] node{}  (3)
(1512)  edge [bend right=10, color=black] node[above=0, right=-15,  fill=white]{$\alpha=-\frac{1}{2}$}  (13aa)

(16a)  edge [bend right=0, color=black] node{}  (11)
(16a)  edge [bend right=0, color=black] node{}  (13aa)

(3)  edge [bend right=0, color=blue] node{}  (0)

(11)  edge [bend right=0, color=blue] node{}  (0)

(61)  edge [bend right=0, color=black] node{}  (0)

(13aa)  edge [bend right=0, color=black] node{}  (0);
\end{tikzpicture}
\end{center}
}
\end{landscape}


\subsection{The irreducible components of $\mathcal{LS}^{(2,2)}$}

In order to describe the irreducible components which are closures of single orbits, we must obtain the rigid Lie superalgebras in $\mathcal{LS}^{(2,2)}$. Analogous to the Lie algebras case, it follows that if the cohomology group $(H^2(\g, \g))_0=0$ then the Lie superalgebra $\g$ is rigid.  Then we obtain:

\begin{theorem}\label{thm:rigid}
There are 3 rigid Lie superalgebras in the variety $\mathcal{LS}^{(2,2)}$: $\mathcal{LS}_{19}$, $\mathcal{LS}_1$ and $\mathcal{LS}_5$. Moreover, $\mathcal{LS}_1$ is a rigid nilpotent Lie superalgebra.
\end{theorem}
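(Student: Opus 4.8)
The plan is to establish the statement in three steps: rigidity of $\mathcal{LS}_{19},\mathcal{LS}_1,\mathcal{LS}_5$; non-rigidity of every other isomorphism class; and nilpotency of $\mathcal{LS}_1$. For the first step I would invoke the cohomological criterion recalled just above the theorem: if $(H^2(\g,\g))_0=0$ then $\g$ is rigid. Thus for each $\g\in\{\mathcal{LS}_{19},\mathcal{LS}_1,\mathcal{LS}_5\}$ I set up the Chevalley--Eilenberg cochain complex $C^{\bullet}(\g,\g)$ computing the cohomology of $\g$ with coefficients in the adjoint module and restrict it to its even-parity part. Since $\dim\g=4$, one computes $\dim(C^1(\g,\g))_0=8$, $\dim(C^2(\g,\g))_0=16$, $\dim(C^3(\g,\g))_0=24$; writing the coboundary operators $d^1\colon(C^1(\g,\g))_0\to(C^2(\g,\g))_0$ and $d^2\colon(C^2(\g,\g))_0\to(C^3(\g,\g))_0$ explicitly as matrices in the structure constants of $\g$, it then suffices to check, in each of the three cases, that $\rk d^1+\rk d^2=16$; since $d^2\circ d^1=0$, this is exactly $(H^2(\g,\g))_0=0$, so rigidity follows. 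Each verification is a finite linear-algebra computation.

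For the second step I would show that every isomorphism class other than $\mathcal{LS}_{19},\mathcal{LS}_1,\mathcal{LS}_5$ has non-open orbit, using one of two arguments. \emph{(a)} If $\g$ appears on the right of an arrow $\h\to\g$ of Table \ref{Table3} with $\h\not\simeq\g$, then by Lemma \ref{lem:invariants}\,(i) $\dim\mathcal{O}(\g)<\dim\mathcal{O}(\h)$; since $\mathcal{O}(\g)\subseteq\overline{\mathcal{O}(\h)}$ and $\overline{\mathcal{O}(\h)}$ is irreducible, an open $\mathcal{O}(\g)$ would be dense in $\overline{\mathcal{O}(\h)}$, forcing $\overline{\mathcal{O}(\g)}=\overline{\mathcal{O}(\h)}$ and contradicting the dimension inequality; hence $\mathcal{O}(\g)$ is not open. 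This treats $\mathcal{LS}_0,\mathcal{LS}_2,\mathcal{LS}_3,\mathcal{LS}_4,\mathcal{LS}_9,\mathcal{LS}_{10},\mathcal{LS}_{11},\mathcal{LS}_{12},\mathcal{LS}_6^1,\mathcal{LS}_{15}^{-1/2}$, as well as those members of the families $\mathcal{LS}_6^\alpha,\mathcal{LS}_{13}^{\alpha,\beta},\mathcal{LS}_{14}^\alpha,\mathcal{LS}_{16}^\alpha,\mathcal{LS}_{18}^\alpha$ reached by a degeneration listed in Table \ref{Table3}. \emph{(b)} If every neighbourhood of $\g$ in $\mathcal{LS}^{(2,2)}$ contains structure constants defining Lie superalgebras not isomorphic to $\g$, then $\mathcal{O}(\g)$ is not open. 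For a member $\mathcal{LS}_n^{\alpha}$ (or $\mathcal{LS}_{13}^{\alpha,\beta}$) of a parametrized family this is immediate, since the structure constants depend polynomially on the parameter(s) and, by the isomorphism criteria of Theorem \ref{thm:alg_clas}, distinct nearby parameter values give non-isomorphic Lie superalgebras; this disposes of all remaining family members. The only classes not yet handled are $\mathcal{LS}_7,\mathcal{LS}_8,\mathcal{LS}_{17}$, which are neither proper degenerations of a larger orbit nor members of a family; for these I would write down explicit one-parameter deformations into parametrized families. For instance, replacing $[e_2,f_1]=-\tfrac{1}{2} f_1$ in $\mathcal{LS}_{17}$ by $[e_2,f_1]=\gamma f_1$ yields a Lie superalgebra which, after the change of basis $f_2\mapsto f_2-(\gamma+\tfrac{1}{2})^{-1}f_1$, is a copy of $\mathcal{LS}_{15}^{\gamma}$ for $\gamma\neq-\tfrac{1}{2}$; and suitable diagonal changes of basis $g_s$ give $\lim_{s\to0}g_s\cdot\mathcal{LS}_{14}^{1/s}=\mathcal{LS}_7$ and $\lim_{s\to0}g_s\cdot\mathcal{LS}_{15}^{1/s}=\mathcal{LS}_8$. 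In each case the Lie superalgebras that accumulate at $\g$ are distinguished from $\g$ by the invariants of Table \ref{Table1} (for $\mathcal{LS}_7,\mathcal{LS}_8$ by the abelianity of the even part, for $\mathcal{LS}_{17}$ by the invariant $c_{i,j}$), hence are not isomorphic to $\g$, and argument \emph{(b)} applies. Cases \emph{(a)} and \emph{(b)} exhaust all isomorphism classes, so $\mathcal{LS}_{19},\mathcal{LS}_1,\mathcal{LS}_5$ are the only rigid Lie superalgebras in $\mathcal{LS}^{(2,2)}$.

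Finally, the only non-zero products of $\mathcal{LS}_1$ are $[f_1,f_1]=e_1$ and $[f_2,f_2]=e_2$, so $[\mathcal{LS}_1,\mathcal{LS}_1]=(\mathcal{LS}_1)_0$ and $[(\mathcal{LS}_1)_0,\mathcal{LS}_1]=0$; thus the lower central series of $\mathcal{LS}_1$ terminates after two steps and $\mathcal{LS}_1$ is nilpotent, which together with the first step makes it a nilpotent rigid Lie superalgebra. The main obstacle is the cohomology computation of the first step---setting up the super Chevalley--Eilenberg differential with the correct signs and parities and computing the ranks of $d^1$ and $d^2$ for the three superalgebras; the delicate point in the second step is argument \emph{(b)} for $\mathcal{LS}_7,\mathcal{LS}_8,\mathcal{LS}_{17}$, where one must exhibit the deformations above and verify, against the invariants of Table \ref{Table1}, that the accumulating superalgebras are indeed non-isomorphic to the given one.
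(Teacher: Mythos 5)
Your proposal is correct and, for the core claim, takes the same route as the paper: rigidity of $\mathcal{LS}_{19}$, $\mathcal{LS}_1$ and $\mathcal{LS}_5$ is obtained from the criterion $(H^2(\g,\g))_0=0$ by direct computation of the even part of the second adjoint cohomology (the paper simply records the resulting groups, with vanishing even part in all three cases, and your dimension counts $8,16,24$ and the rank condition $\rk d^1+\rk d^2=16$ are the correct way to organize that computation). Your arguments \emph{(a)} and \emph{(b)} establishing that no other isomorphism class is rigid are sound and in fact make explicit a part of the statement that the paper's proof leaves implicit, delegating it to the degeneration tables and the component analysis.
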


\begin{proof}
The second cohomology groups are:
\begin{itemize}
\item $H^2(\mathcal{LS}_{19},\mathcal{LS}_{19})=\langle 0\rangle\oplus \langle e^1\wedge e^2\otimes f_1-e^1\wedge f^2\otimes e_1+e^2\wedge f^1\otimes e_1\rangle$.
\item $H^2(\mathcal{LS}_1,\mathcal{LS}_1)=\langle 0\rangle\oplus \langle 2e^1\wedge f^2\otimes e_1+f^1\wedge f^2\otimes f_1,2e^2\wedge f^1\otimes e_2+f^1\wedge f^2\otimes f_2\rangle$.
\item $H^2(\mathcal{LS}_5,\mathcal{LS}_5)=0$.
\end{itemize}
\end{proof}
The degenerations in the next table, will help us to obtain the irreducible components which are closure of the union of infinite families of Lie superalgebras. Suppose $\mathcal{L}=\{\g(\alpha)\}_{\alpha}$ is an infinite family of Lie superalgebras, then we consider $\alpha$ ($=\alpha(t)$) as parameterized by $t$  and construct a degeneration from $\g(\alpha(t))\to\h$ as usual.

{\scriptsize
\begin{longtable}{|l|llll|}
\caption[]{Degenerations}
\label{Table4}\\
\hline
$\g(\alpha)\to\h$ & \multicolumn{4}{|c|}{Parametrized bases} \\ 
\hline
\endfirsthead
\caption[]{(continued)}\\
\hline
$\g\to\h$ & \multicolumn{4}{|c|}{Parameterized bases} \\ \hline
\endhead

$\mathcal{LS}_{14}^{\alpha(t)}\rightarrow \mathcal{LS}_{17}$ & $x_1^t=e_1$, & $x_2^t=e_2$, &  $y_1^t=-i\sqrt{t}f_1$, &  $y_2^t=\frac{1}{2}f_1+f_2$\\ 
$\alpha(t)=-\frac{1}{2}-i\sqrt{t}$ &&&&  \\ \hline

$\mathcal{LS}_{14}^{\alpha(t)}\rightarrow \mathcal{LS}_{7}$ & $x_1^t=e_1$, & $x_2^t=-te_2$, &  $y_1^t=f_1$, &  $y_2^t=f_2$\\ 
$\alpha(t)=-\frac{1}{t}$ &&&&  \\ \hline

$\mathcal{LS}_{15}^{\alpha(t)}\rightarrow \mathcal{LS}_{8}$ & $x_1^t=e_1$, & $x_2^t=-2te_2$, &  $y_1^t=f_1$, &  $y_2^t=f_2$\\ 
$\alpha(t)=-\frac{1}{2t}$ &&&&  \\ \hline

$\mathcal{LS}_{18}^{\alpha(t)}\rightarrow \mathcal{LS}_{9}$ & $x_1^t=-te_2$, & $x_2^t=e_1$, &  $y_1^t=f_1$, &  $y_2^t=f_2$\\ 
$\alpha(t)=-\left(\frac{1+t}{t}\right)$ &&&&  \\ \hline

$\mathcal{LS}_{13}^{\alpha(t),\beta(t)}\rightarrow \mathcal{LS}_{6}^{\gamma}$ & $x_1^t=e_1$, & $x_2^t=te_2$, &  $y_1^t=f_1$, &  $y_2^t=f_2$\\ 
$\alpha(t)=\frac{1}{t},\ \beta(t)=\frac{\gamma}{t}$ &&&&  \\ \hline

$\mathcal{LS}_{13}^{\alpha(t),\beta(t)}\rightarrow \mathcal{LS}_{16}^{\gamma}$ & $x_1^t=e_1$, & $x_2^t=e_2$, &  $y_1^t=2\sqrt{t}f_1$, &  $y_2^t=f_1+f_2$\\ 
$\alpha(t)=\frac{\gamma+\sqrt{t}}{1+t},\ \beta(t)=\frac{\gamma-\sqrt{t}}{1+t}$ &&&&  \\ \hline

\end{longtable}}

\normalsize

With all this, we can enunciate the main theorem of this Section:

\begin{theorem}
The irreducible components of the variety $\mathcal{LS}^{(2,2)}$ are:
\begin{itemize}
\item $\mathcal{C}_1=\overline{\mathcal{O}(\mathcal{LS}_{19})}$,
\item $\mathcal{C}_2=\overline{\mathcal{O}(\mathcal{LS}_{1})}$,
\item $\mathcal{C}_3=\overline{\mathcal{O}(\mathcal{LS}_{5})}$,
\item $\mathcal{C}_4=\displaystyle\overline{\bigcup_{\alpha}\mathcal{O}(\mathcal{LS}_{14}^{\alpha})}$,
\item $\mathcal{C}_5=\displaystyle\overline{\bigcup_{\alpha}\mathcal{O}(\mathcal{LS}_{15}^{\alpha})}$,
\item $\mathcal{C}_6=\displaystyle\overline{\bigcup_{\alpha}\mathcal{O}(\mathcal{LS}_{18}^{\alpha})}$,
\item $\mathcal{C}_7=\displaystyle\overline{\bigcup_{\alpha,\beta}\mathcal{O}(\mathcal{LS}_{13}^{\alpha,\beta})}$.
\end{itemize}
\label{2-2-irred-comp}
\end{theorem}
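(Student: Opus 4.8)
The plan is to use the fact that $\mathcal{LS}^{(2,2)}$ is the union of all $G$-orbit closures together with the degeneration data already assembled (Tables~\ref{Table3} and~\ref{Table4}, the Hasse diagram) and the non-degenerations (Table~\ref{Table1}, Table~\ref{Table2}, Lemmas~\ref{lem:deg_alg}--\ref{lemma:ij-inv}). Every irreducible component is, by Lemma~1.3, the closure of a single orbit or of an infinite family of orbits; hence it suffices to (a) exhibit seven irreducible closed sets $\mathcal{C}_1,\dots,\mathcal{C}_7$ whose union is all of $\mathcal{LS}^{(2,2)}$, and (b) show none of them is contained in the union of the others, so the decomposition is irredundant and therefore \emph{the} decomposition into components.

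First I would establish that the seven listed sets cover $\mathcal{LS}^{(2,2)}$. Each $\mathcal{C}_i$ is irreducible: $\mathcal{C}_1,\mathcal{C}_2,\mathcal{C}_3$ are closures of single orbits (hence irreducible as continuous images of the irreducible group $G$), and $\mathcal{C}_4,\dots,\mathcal{C}_7$ are closures of the images of irreducible parameter spaces ($\mathbb{A}^1$ or $\mathbb{A}^2$) under a morphism into $\mathcal{LS}^{(2,2)}$, hence irreducible. For the covering, I would go through the nineteen isomorphism classes of Table~\ref{thm:alg_clas} and check, using the degenerations recorded in Tables~\ref{Table3}--\ref{Table4}, that each $\mathcal{LS}_n$ (and each member of each one-parameter or two-parameter family) lies in at least one $\mathcal{C}_i$. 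Concretely: $\mathcal{LS}_{19}\in\mathcal{C}_1$ dominates $\mathcal{LS}_4,\mathcal{LS}_{12},\mathcal{LS}_{18}^{-1},\mathcal{LS}_{14}^0$ and, transitively through the Hasse diagram, $\mathcal{LS}_2,\mathcal{LS}_3,\mathcal{LS}_{11},\mathcal{LS}_0$; $\mathcal{LS}_1\in\mathcal{C}_2$ adds nothing new beyond $\mathcal{LS}_4$; $\mathcal{LS}_5\in\mathcal{C}_3$ dominates $\mathcal{LS}_9,\mathcal{LS}_6^\alpha$ (all $\alpha$) and hence $\mathcal{LS}_{10},\mathcal{LS}_6^1$; $\mathcal{LS}_{14}^\alpha\in\mathcal{C}_4$ dominates (via Table~\ref{Table4}) $\mathcal{LS}_{17},\mathcal{LS}_7,\mathcal{LS}_8$ and (via Table~\ref{Table3}) $\mathcal{LS}_{13}^{\alpha,-(\alpha+1)}$ and $\mathcal{LS}_{15}^{-1/2}$; $\mathcal{LS}_{15}^{\alpha}\in\mathcal{C}_5$ dominates $\mathcal{LS}_8$ and $\mathcal{LS}_{13}^{\alpha,-1/2}$; $\mathcal{LS}_{18}^\alpha\in\mathcal{C}_6$ dominates $\mathcal{LS}_9$ and $\mathcal{LS}_{13}^{\alpha,\alpha+1}$; $\mathcal{LS}_{13}^{\alpha,\beta}\in\mathcal{C}_7$ dominates $\mathcal{LS}_{11},\mathcal{LS}_6^\gamma,\mathcal{LS}_{16}^\gamma$ (all $\gamma$). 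Tallying which classes are hit, every one of $\mathcal{LS}_0,\dots,\mathcal{LS}_{19}$ and every parameter value is accounted for, so $\mathcal{LS}^{(2,2)}=\bigcup_{i=1}^7\mathcal{C}_i$.

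Next I would prove irredundancy, i.e.\ for each $i$ that $\mathcal{C}_i\not\subseteq\bigcup_{j\neq i}\mathcal{C}_j$; since the $\mathcal{C}_j$ are closed and irreducible, and $\mathcal{C}_i$ is irreducible, this is equivalent to showing the generic point of $\mathcal{C}_i$ lies in no $\mathcal{C}_j$ with $j\neq i$, hence that there is no degeneration from the generic member of the $i$-th family to any member of any other family. For the three rigid algebras this is immediate: by Theorem~\ref{thm:rigid} the orbits of $\mathcal{LS}_{19},\mathcal{LS}_1,\mathcal{LS}_5$ are open, so $\mathcal{C}_1,\mathcal{C}_2,\mathcal{C}_3$ each contain a nonempty open set of $\mathcal{LS}^{(2,2)}$ that meets no other component; equivalently, nothing degenerates \emph{to} a rigid algebra except itself, and one checks from the orbit-dimension column of Table~\ref{Table1} together with the non-degeneration arguments that none of these three lies in the closure of any other component. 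For $\mathcal{C}_4,\mathcal{C}_5,\mathcal{C}_6,\mathcal{C}_7$ I would argue that the generic $\mathcal{LS}_{14}^\alpha$ (resp.\ $\mathcal{LS}_{15}^\alpha$, $\mathcal{LS}_{18}^\alpha$, $\mathcal{LS}_{13}^{\alpha,\beta}$) is not a degeneration of any algebra outside that family — using dimension counts ($\dim\mathcal{O}=5,5,5,4$ respectively, versus the orbit dimensions of the candidates in Table~\ref{Table1}), the invariant $\operatorname{rk}\Gamma_\g$, the $(i,j)$-invariant (which for a generic $\alpha$ pins down the family), and most decisively the $(\alpha,\beta,\gamma)$-derivation dimensions and the functors $\mathcal{F}$ and $\mathrm{ab}$ of Lemmas~\ref{lem:abel} and~\ref{lemma:ij-inv}, exactly the arguments tabulated in Table~\ref{Table2}. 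In particular the $(i,j)$-invariant of $\mathcal{LS}_{14}^\alpha$ is a nonconstant rational function of $\alpha$, so for generic $\alpha$ no other family or rigid algebra can share it; likewise for the other three families, after deleting the finitely many special parameter values.

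The main obstacle is the irredundancy step for the four non-rigid components, specifically ruling out \emph{every} possible degeneration into the generic member of each family. The subtlety is that a family $\mathcal{C}_i$ could a priori be swallowed by another family $\mathcal{C}_j$ not through a single degeneration $\g(\alpha)\to\h(\alpha)$ but because the closure $\overline{\bigcup_\alpha\mathcal{O}(\mathcal{LS}_j^\alpha)}$ — taken over \emph{all} $\alpha$, including limits — might contain the generic $\mathcal{LS}_i^\beta$; so one must verify that the two-parameter-or-less "envelope" of family $j$ does not generically cover family $i$. Here the cleanest tool is again the $(i,j)$-invariant and the $\mathcal{F}$-functor: since $\mathcal{F}(\mathcal{LS}_{14}^\alpha)\simeq\mathcal{LS}_{13}^{\alpha,-(\alpha+1)}$, $\mathcal{F}(\mathcal{LS}_{15}^\alpha)\simeq\mathcal{LS}_{13}^{\alpha,-1/2}$, $\mathcal{F}(\mathcal{LS}_{16}^\alpha)\simeq\mathcal{LS}_{13}^{\alpha,\alpha}$, and $\mathcal{F}$ commutes with degeneration (Lemma~\ref{lem:abel}), a generic degeneration between two of these families would force a degeneration between the corresponding curves inside the $\mathcal{LS}_{13}$ two-parameter family, which one excludes by the derivation computations of Table~\ref{Table2}; the families $\mathcal{C}_5,\mathcal{C}_6,\mathcal{C}_7$ then separate because their "$\mathcal{F}$-shadows" are three distinct curves (and a point, for $\mathcal{C}_2$) in the $(\alpha,\beta)$-plane, none contained in another, while $\mathcal{C}_4$ is distinguished from $\mathcal{C}_5$ and $\mathcal{C}_6$ by $\operatorname{rk}\Gamma$ and the derivation invariants. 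Assembling these checks, the seven $\mathcal{C}_i$ form an irredundant cover of $\mathcal{LS}^{(2,2)}$ by irreducible closed sets, hence are exactly its irreducible components.
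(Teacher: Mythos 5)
Your proposal is correct and follows essentially the same route as the paper, which establishes the theorem by assembling the degeneration data of Tables~\ref{Table3} and~\ref{Table4} (to show the seven closed irreducible sets cover $\mathcal{LS}^{(2,2)}$), the invariants and non-degenerations of Tables~\ref{Table1} and~\ref{Table2} (to show no $\mathcal{C}_i$ is contained in the union of the others), and Theorem~\ref{thm:rigid} (to identify the three single-orbit components). The only slip is attributing the covering of $\mathcal{LS}_8$ to $\mathcal{C}_4$ via Table~\ref{Table4}: the relevant degeneration there is $\mathcal{LS}_{15}^{\alpha(t)}\rightarrow\mathcal{LS}_8$, so $\mathcal{LS}_8$ is accounted for by $\mathcal{C}_5$, which you also record, and the covering argument is unaffected.
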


\section*{Aknowledgements} The first author was supported by grant FOMIX- CONACYT  YUC-2013-C14-221183 and Becas Iberoam\'erica de J\'ovenes Profesores e Investigadores, Santander Universidades. The second author was supported by grants FOMIX-CONACYT  YUC-2013-C14-221183 and 222870. The first author express her gratitude to Universidad Aut\'onoma de Yucat\'an and Centro de Investigaci\'on en Matem\'aticas - Unidad M\'erida for their hospitality during her research stays in both centers. Both authors thank Ivan Kaygorodov for useful comments about the presentation of this paper.

\end{document}